\documentclass[11pt]{article}
\usepackage[T1]{fontenc}
\usepackage[utf8]{inputenc}
\usepackage{lmodern}
\usepackage{amsmath,amssymb,amsthm,mathtools}
\usepackage[margin=1in]{geometry}
\usepackage{microtype,enumitem,xcolor}
\usepackage[colorlinks=true,linkcolor=blue!40!black,citecolor=blue!40!black,urlcolor=blue!40!black]{hyperref}

\newtheorem{theorem}{Theorem}[section]
\newtheorem{lemma}[theorem]{Lemma}
\newtheorem{proposition}[theorem]{Proposition}
\newtheorem{corollary}[theorem]{Corollary}
\theoremstyle{remark}
\newtheorem{remark}[theorem]{Remark}
\newtheorem{question}[theorem]{Question}

\newcommand{\Z}{\mathbb Z}
\newcommand{\floor}[1]{\left\lfloor #1\right\rfloor}
\newcommand{\sq}{\square}
\DeclareMathOperator{\lcm}{lcm}
\numberwithin{equation}{section}
\newcommand{\step}[1]{\smallskip\noindent\emph{#1}\quad}

\hypersetup{pdftitle={Square products of factorials and a conjecture of Erd\H{o}s and Graham},pdfauthor={Fedir Yudin},pdfsubject={Asymptotics and orders of growth for square products of factorials}}

\title{Square products of factorials\\ and a conjecture of Erd\H{o}s and Graham}
\author{Fedir Yudin\\
\small Massachusetts Institute of Technology\\
\small\href{mailto:fedir@mit.edu}{\texttt{fedir@mit.edu}}}
\date{October 1, 2026}

\begin{document}
\maketitle

\begin{abstract}
For $n\ge2$ let $F(n)$ be the least $k\ge2$ such that $n!$ is the largest factor in a product of $k$ distinct factorials that is a perfect square, and let $D_k(X)$ be the number of $n\le X$ with $F(n)=k$. Erd\H{o}s and Graham asked for the order of growth of $D_k(X)$ for $3\le k\le6$, and conjectured that $D_6(X)\gg X$. We prove that
$D_3(X)=\kappa_3\sqrt X+O_\varepsilon(X^{2/5+\varepsilon})$ with an explicit constant $\kappa_3=2.7097\ldots$, and that $D_5(X)\asymp D_6(X)\asymp X$. Together with classical facts, this determines the order of growth of $D_k(X)$ for every $k$. The exponent $2/5$ comes from balancing a uniform bound for Pell equations against Gallagher's larger sieve, with residue restrictions supplied by the Weil bound. For five and six factors we restrict to integers with a prime factor exceeding $X^{1-\alpha}$, where $\alpha>0$ is small and fixed. We exclude shorter representations by combining an equidistribution estimate for primes of Matom\"aki, Radziwi{\l}{\l}, Shao, Tao and Ter\"av\"ainen with the large sieve. In an appendix we use a zero-sum theorem for finite abelian groups to construct, for every $m\ge2$, perfect $m$-th powers that are products of a bounded number of distinct factorials with arguments given by fixed affine functions.
\end{abstract}

\noindent\textit{2020 Mathematics Subject Classification.} Primary 11D85; Secondary 11N25, 11N36, 11L20, 11B75.\par
\noindent\textit{Keywords.} Products of factorials, perfect squares, larger sieve, exponential sums over primes, zero-sum sequences.

\section{Introduction}

For an integer $n\ge2$, let $F(n)$ be the least $k\ge2$ for which there are integers
\[
 1\le a_1<a_2<\cdots<a_k=n\qquad\text{with}\qquad a_1!\,a_2!\cdots a_k!=\sq,
\]
where $\sq$ denotes the square of an integer; put $F(n)=\infty$ if there is no such $k$. (Allowing $a_1=1$ does not change $F(n)$.) For $k\ge2$ let
\[
 \mathcal D_k=\{n\ge2:F(n)=k\},\qquad D_k(X)=\#(\mathcal D_k\cap[1,X]).
\]

These sets were introduced by Erd\H{o}s and Graham~\cite{EG}. Primes admit no representation at all, $\mathcal D_2$ consists of the squares $n\ge4$, and every composite $n$ has $F(n)\le6$, so $\mathcal D_k=\emptyset$ for $k\ge7$.

Erd\H{o}s and Graham showed that $D_3(X)=o(X)$, observed that $D_4(X)\gg X$, and wrote~\cite[p.~354]{EG}: ``We are reasonably certain that $D_6(n)>cn$.'' The question of the growth of $D_k(X)$ for $3\le k\le6$ is Problem~374 in Bloom's list of Erd\H{o}s problems~\cite{Bloom}.

For three factorials, Luca, Saradha and Shorey~\cite[Theorem~1]{LSS} proved
\[
 D_3(X)\ll X\exp\bigl(-c_0(\log X)^{1/4}(\log\log X)^{3/4}\bigr)
\]
for some $c_0>0$, and Tao~\cite[Theorem~1.10]{T} recently showed that the equation $a_1!a_2!a_3!=\sq$ has $X^{1/2+o(1)}$ solutions with $1\le a_1<a_2<a_3\le X$. The solutions with $a_2=a_3-1$ are easy to count: $n!(n-1)!c!=\sq$ holds precisely when $n$ and $c!$ have the same squarefree part. Write $s(v)$ for the squarefree part of $v$, and put
\begin{equation}\label{eq:kappa}
 \mathcal S=\{s(c!):c\ge1\},\qquad
 \kappa_3=\sum_{a\in\mathcal S\setminus\{1\}}\frac1{\sqrt a}=2.709751\ldots,
\end{equation}
each element of $\mathcal S$ being counted once. The series converges; see Section~\ref{subsec:main-term}. Our first result shows that these consecutive representations account for the main term of $D_3(X)$.

\begin{theorem}\label{thm:three}
For every $\varepsilon>0$,
\[
 D_3(X)=\kappa_3\sqrt X+O_\varepsilon\bigl(X^{2/5+\varepsilon}\bigr).
\]
\end{theorem}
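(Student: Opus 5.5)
We split $\mathcal D_3\cap[1,X]$ into the \emph{consecutive} part, where $n$ has a representation $n!(n-1)!c!=\sq$, and the \emph{non-consecutive} part, where every three-term representation $a_1!a_2!n!=\sq$ has $a_2\le n-2$. We count the first part asymptotically and show the second is $O_\varepsilon(X^{2/5+\varepsilon})$.

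\step{Main term.} Since $n!(n-1)!=n\,((n-1)!)^2$, the relation $n!(n-1)!c!=\sq$ with $c<n-1$ holds exactly when $s(n)=s(c!)$. Hence $n$ lies in the consecutive part iff $s(n)=a$ for some $a\in\mathcal S$ realised by some $c\le n-2$, and $n$ is not a square (squares have $F=2$, which is why $a=1$ is excluded). Thus $n=am^2$ with $a\in\mathcal S\setminus\{1\}$. The number of such $n\le X$ is
\[
 \sum_{a\in\mathcal S\setminus\{1\}}\bigl(\sqrt{X/a}+O(1)\bigr),
\]
minus the few $n$ for which the least $c$ with $s(c!)=a$ fails to satisfy $c\le n-2$. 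Elements $a=s(c!)$ are divisible by every prime in $(c/2,c]$, so $a\ge e^{(1/2+o(1))c}$. Consequently only $O(\log X)$ elements of $\mathcal S$ are below $X$, and the tail $\sum_{a>T}a^{-1/2}$ decays like a power of $T$. The $O(1)$ errors therefore contribute $O(\log X)$, the truncation at $a\le X$ costs $O(X^{1/2}\cdot X^{-1/2+o(1)})$, and the condition $c\le n-2$ removes at most $O(\log X)$ values of $n$. This gives $\kappa_3\sqrt X+O(X^{\varepsilon})$ for the consecutive part.

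\step{Non-consecutive part.} Write $a_2=n-h$ with $h\ge2$. The condition $n!/(n-h)!\cdot a_1!=\sq$ says that the product $P_h(n)=n(n-1)\cdots(n-h+1)$ has squarefree part in $\mathcal S$.

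\emph{Large $h$.} For $h$ larger than about $\log X$, Bertrand-type arguments force $a_1$ to be close to $n$ and reduce to a bounded configuration. Alternatively, the results of Luca--Saradha--Shorey and Tao bound these cases cheaply. I would dispose of this regime first.

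\emph{Fixed small $h\ge2$ and fixed $a=s(c!)$.} The equation becomes $P_h(n)=a y^2$ with $n\le X$.

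\begin{itemize}[leftmargin=*]
\item For $h=2$ we get $n(n-1)=ay^2$. Writing $n=d_1u^2$ and $n-1=d_2v^2$ with $d_1d_2=a$ (up to square factors) gives Pell equations $d_1u^2-d_2v^2=1$. A uniform bound counts $O(\log X)$ solutions per pair $(d_1,d_2)$, and there are few divisor pairs, so this case costs $X^{\varepsilon}$ when $a\le X^{O(1)}$.
\item For $h\ge3$ the curve $P_h(n)=ay^2$ has genus $\ge1$, but no uniformity in $a$ is available. Here I would use Gallagher's larger sieve. For each prime $p\le Q$ with $p\nmid a$, the residue $n\bmod p$ must make $P_h(n)/a$ a square mod $p$. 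By the Weil bound, this admits about $p/2+O(h\sqrt p)$ residues. The larger sieve then gives $\ll Q/(\text{something}\cdot\log Q)$ solutions up to $X$ when $Q\approx X^{1/2}$, which is at best about $X^{1/2}$ per $a$ and is not good enough on its own.
\end{itemize}

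\emph{Balancing.} The way to reach $2/5$ is to sum over the $n$ in dyadic ranges and combine the two bounds. Treat $n$ with $P_h(n)/a$ having a large square part through Pell-type parametrisations, giving a bound like $X^{1/2}/Y$ over square parts exceeding $Y$ plus $Y^{?}$ terms. Apply the larger sieve to the residue-restricted set otherwise, with restrictions coming from Weil (squares of a degree-$h$ polynomial mod $p$, roughly half the residues). Then optimise the threshold. I expect that choosing the square-part cutoff as $X^{1/10}$ gives $X^{2/5+\varepsilon}$.

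\step{Main obstacle.} I expect the main obstacle to be exactly this balancing. It requires making the Pell bound uniform in the coefficient and the square-factor parameters, including the $O(\log)$ bound on solutions in a box via fundamental units, and verifying that the Weil-bound residue restriction loses only a $p^{1/2}$ error so that the larger sieve saves a genuine power. I would first set up the dichotomy by the size of the square part of $n$ (or of $n-1$), then optimise the parameters.
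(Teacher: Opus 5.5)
There is a genuine gap: your main-term computation is fine and matches the paper, but the bound $O_\varepsilon(X^{2/5+\varepsilon})$ for the non-consecutive part is not established. Two steps fail as stated.

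\emph{The large-$h$ regime.} It cannot be disposed of as you suggest. The Bertrand argument says that primes in $(a_1/2,a_1]$ occur once in $a_1!$ and so must divide the block. This gives $a_1\ll h\log n$, so $a_1$ is \emph{small}, not close to $n$. Neither Luca--Saradha--Shorey nor Tao's $X^{1/2+o(1)}$ count says anything at the $X^{2/5}$ level. What is available is Tao's bound $h\le\exp((\log n)^{2/3+o(1)})$ for intervals of type $F_3$. That only yields $h,a_1\le X^{\eta}$, so the whole argument must be uniform in $h$ up to $X^\eta$, not organised by fixed small $h$.

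\emph{The balancing.} This is where $2/5$ comes from, and your dichotomy by the square part of $n$ (or $n-1$) does not produce it, as far as it is specified. A single term with small squarefree part gives no saving, since there are $\asymp\sqrt{XT}$ integers $n\le X$ with $s(n)\le T$. For fixed $(a,h)$ the Weil-plus-larger-sieve bound is stuck at $X^{1/2}$, as you note yourself.

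The missing ideas are these. Split according to whether $s(a_1!)\le X^{h/5}$.

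\emph{Small case.} A prime $p>h$ divides at most one term of $n,\dots,n-h+1$. Hence it divides some $d_i=s(n-i)$ iff it divides $s(a_1!)$. So $\prod_i d_i\le s(a_1!)\,e^{h\log h+O(h)}$, and the two smallest satisfy $d_id_j\ll h^2s(a_1!)^{2/h}\le X^{2/5+2\eta}$. Then $d_iu_i^2-d_ju_j^2=j-i$ is a Pell equation with small coefficients. You count over all coefficient pairs with product at most $X^{2/5+2\eta}$, without fixing $a_1$ or $h$, and apply the uniform Pell bound. This gives $X^{2/5+O(\eta)}$ values of $n$.

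\emph{Large case.} Use the primes $p>h$ dividing $s(a_1!)$ --- exactly the primes your sieve discards. Each confines $n$ to the $h$ classes $0,\dots,h-1$ modulo $p$. Together they contribute at least $\frac15\log X-O(1)$ to the denominator of Gallagher's sieve. Add the Weil restrictions at primes $q\in(X^{3\eta},X^{2/5+4\eta}]$; the lower cutoff makes the $h\sqrt q$ error negligible, and these primes contribute about $(\frac45+2\eta)\log X$. Then $\Delta\ge\eta\log X$, and the sieve gives $\ll X^{2/5+4\eta}$ values of $n$ for each of the at most $X^{2\eta}$ pairs $(a_1,h)$.

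Without these two ingredients the proposal does not get below $X^{1/2}$.
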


In the notation of~\cite[Section~1.3, (1.17)]{T} we have $\kappa_3=c_3^1-1$; the subtraction of $1$ accounts for the squares, which lie in $\mathcal D_2$. The new estimate is Theorem~\ref{thm:nonconsecutive}: only $O_\varepsilon(X^{2/5+\varepsilon})$ integers $n\le X$ admit a representation $n!(n-\ell)!c!=\sq$ with $\ell\ge2$.

Our second result confirms the conjecture of Erd\H{o}s and Graham, together with its analogue for five factorials.

\begin{theorem}\label{thm:main}
There is a constant $c_*>0$ such that, for $k\in\{5,6\}$ and all sufficiently large $X$, at least $c_*X$ squarefree integers $n\in[X/2,X]$ satisfy $F(n)=k$. For $k=5$ this holds separately for even and for odd $n$.
\end{theorem}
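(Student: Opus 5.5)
We restrict to squarefree $n=pm\in[X/2,X]$ with $p$ prime, $p>X^{1-\alpha}$ and $m\le X^{\alpha}$ squarefree, $p\nmid m$, as the abstract suggests. Two things must be shown for a positive proportion of such $n$ (in suitable residue classes): an \emph{upper bound} $F(n)\le k$, by an explicit construction, and a \emph{lower bound} $F(n)\ge k$, by ruling out shorter representations. For $n$ with a large prime factor $p$, every factorial $a_i!$ with $a_i\ge p$ contains $p$ exactly once when $a_i<2p$. Since $2p>n$ once $m\ge2$ and $\alpha$ is small, the number of $a_i$ in $[p,n]$ must be even. This parity constraint is what forces long representations.

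\step{Upper bound.} I would use the standard constructions $n!(n-1)!=n\cdot\sq$ and $n!(n-2)!=n(n-1)\cdot\sq$, together with identities such as $(2t)!/(t!)^2$ arguments, to absorb the squarefree part $n=pm$. Concretely, I would write
\[
n!\,(n-1)!\cdot\prod_{j} b_j! = \sq ,
\]
where the $b_j<p$ are chosen so that $\prod b_j!$ has squarefree part $pm$. Since $p$ divides no $b_j!$ with $b_j<p$, this is impossible. So instead the $a_i\ge p$ must come in pairs, for example $n!,(n-1)!$ paired with $p!,(p-1)!$, giving squarefree part $s(mp\cdot p)=m$. The square class $m$ is then killed by a short product of small factorials. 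I would fix $m$ from a finite explicit list (for instance $m$ with $s(m)=s(c!(c-1)!)$ arrangements) and choose residue conditions so that exactly six, respectively five, factorials suffice. For $k=5$, the parity of $n$ is controlled by choosing even or odd $m$. This step is a finite bookkeeping exercise.

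\step{Lower bound.} I must exclude representations with $3$ or $4$ factorials (and with $5$ when $k=6$). With the parity constraint, the factorials at least $p$ form pairs $x!,y!$ with $p\le y<x\le n$. The contribution $x!/y!$ is a product of an interval of length at most $n-p$. Because $m\le X^\alpha$, the intervals have length at most $n\le X$, but the key point is that their squarefree parts are tied to $p$.

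Each possible shape gives a Diophantine condition. An example is $n!(n-\ell)!=pm\cdot Q(p)\cdot\sq$, where $Q$ is a fixed small polynomial in $p$ (for fixed $m,\ell$), combined with at most two small factorials $c!,d!$. The condition is therefore that a value $Q(p)$, twisted by an element of $\mathcal S$, is a square up to bounded factors. For bounded $\ell$, I would use the equidistribution of primes in the Matom\"aki--Radziwi{\l}{\l}--Shao--Tao--Ter\"av\"ainen sense to show that the Legendre symbols $\left(\frac{Q(p)}{q}\right)$ for small primes $q$ behave randomly along $p$. The large sieve (modulo many small primes $q$) then shows that only $o(X)$ primes $p$ make $Q(p)$ lie in a prescribed square class. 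For large $\ell$, the interval product contains primes in $(n-\ell,n]$ to odd power that must be matched, and I would use an elementary argument for this.

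\step{Main obstacle.} The main obstacle is uniformity: the number of configurations $(\ell,c,d)$ is unbounded. I would first sieve out small $\ell\le L$ as above, then handle large $\ell$ by showing that the interval $(n-\ell,n]$ contains a prime in $(n-\ell, n]$ (or in $(p-\ell,p]$) to the first power that cannot be cancelled by $c!,d!<p$. The residual exceptional set must then have density smaller than the constructive set, which yields $c_*X$.
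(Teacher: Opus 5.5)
\textbf{There is a genuine gap in your lower bound, and it starts from a false premise.} With $n=pm$ and $m\ge2$ we have $n\ge2p$, not $n<2p$. By Legendre's formula, $v_p(a!)=\lfloor a/p\rfloor$ for $a<p^2$, so the only constraint is that $\sum_i\lfloor a_i/p\rfloor$ be even. The factorials with arguments in $[p,n]$ therefore need not pair up as you claim.

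Because of this, your ``shape'' analysis (one large pair plus small factorials $c!,d!$) misses the representations that matter. Pair the four largest factorials to get $(n)_{\ell_1}(m')_{\ell_2}c!=\sq$ with $n>n-\ell_1>m'>m'-\ell_2>c$. Once $\ell_1,\ell_2,c<p$, the prime $p$ divides $(n)_{\ell_1}$ exactly once. It must then divide a term of the lower block, so $m'-j=tp=(t/m)n$ with $1\le t<m$. Both blocks sit at scale $n$ and are tied by a rational relation with small denominator.

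\textbf{The decisive missing step is the control of block lengths.} Primes in $(n-\ell_1,n]$ give a contradiction only where primes in short intervals are known to exist, i.e.\ for $\ell_1$ beyond about $n^{0.525}$; the paper gets $\ell_1\ll n^{3/5}$ this way from Huxley. For the lower block you get only $\ell_2\ll\ell_1\log n+n^{3/5}$, since primes in $(m'-\ell_2,m']$ can be cancelled by the upper block. At the other end, a large-sieve/Weil bound costs about $\sqrt X$ per configuration $(\ell_1,\ell_2,j,t,c)$. The number of configurations grows like a power of the block length, so this is affordable only for blocks shorter than a small power of $X$. Nothing in your plan covers blocks of length between a small power of $n$ and $n^{0.525}$, and no elementary argument is available there.

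The paper fills this range in Lemma~\ref{lem:short}. It finds a prime $q\in(Q,2Q]$, with $Q=L(\log n)^2$, that divides exactly one term of the two blocks, to the first power, and does not divide $c!$. To do this it builds a smooth $\Z^3$-periodic weight in $(n/q,\theta n/q,M/q^2)$ with $\theta=t/m$. The small denominator of $\theta$ gives the weight positive mass on every period. It then applies the Matom\"aki--Radziwi{\l}{\l}--Shao--Tao--Ter\"av\"ainen theorem with $n$ fixed and $q$ running over primes. Your use of that theorem, for Legendre symbols of $Q(p)$ along primes $p$, is not where it is needed; the large sieve with the Weil bound already does that job.

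\textbf{The constructive side also falls short of positive density.} Fixing $m$ from a finite list gives only $O(X/\log X)$ integers $n=mp\le X$. You need $m$ to range over a set of positive lower density up to $X^\alpha$, so that $\sum1/m\gg\alpha\log X$. Both cases then need more:
\begin{enumerate}
\item For $k=6$ you must still exclude the surviving five-factor representations $n!(n-1)!(tp)!(tp-1)!c!$. These exist exactly when $s(mc!)<m$ for some $c$. The paper restricts to squarefree $m$ with $s(mc!)\ge m$ for all $c$ and proves that this set has positive lower density.
\item For $k=5$, pairing with $p!(p-1)!$ forces $s(m)=s(c!)$, which allows only a sparse set of $m$. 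The paper instead uses Straus identities with $n=dz$, $m'=bz$ and $dbc_0!=\sq$, such as $(2z)!(2z-1)!z!(z-1)!2!$ and $(3z)!(3z-1)!(2z)!(2z-1)!3!$. These are applied to cofactors $m$ divisible by $d$.
\end{enumerate}
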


Combining these results with the classical facts above gives the order of growth of every $D_k$.

\begin{corollary}\label{cor:growth}
As $X\to\infty$,
\[
\begin{gathered}
 D_2(X)=\floor{\sqrt X}-1,\qquad D_3(X)\sim\kappa_3\sqrt X,\\
 D_4(X)\asymp D_5(X)\asymp D_6(X)\asymp X.
\end{gathered}
\]
For $k\ge7$, we have $D_k(X)=0$.
\end{corollary}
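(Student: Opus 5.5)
The corollary is assembled from Theorems~\ref{thm:three} and~\ref{thm:main} together with the classical facts recalled above, so I would go through the values of $k$ one at a time.

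\emph{The cases $k=2$ and $k\ge7$.} For $k=2$, we have $n!\,a_1!=\sq$ with $a_1<n$. Dividing, this says $n!/a_1!=(a_1+1)\cdots n$ is a square. Choosing $a_1=n-1$ shows that every square $n\ge4$ has $F(n)=2$. Conversely, suppose $n$ is not a square and $(a_1+1)\cdots n$ is a square with $a_1\le n-2$. This product of at least two consecutive integers cannot be a square by the Erd\H{o}s--Selfridge theorem. Hence $\mathcal D_2=\{m^2:m\ge2\}$. Counting $m$ with $4\le m^2\le X$ gives $D_2(X)=\floor{\sqrt X}-1$. For $k\ge7$, the introduction records that every composite $n$ has $F(n)\le6$ and that primes have $F(n)=\infty$. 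Thus $\mathcal D_k=\emptyset$ and $D_k(X)=0$.

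\emph{The cases $k=3,5,6$.} Theorem~\ref{thm:three} gives $D_3(X)=\kappa_3\sqrt X+O_\varepsilon(X^{2/5+\varepsilon})$. Taking any fixed $\varepsilon<1/10$ makes the error $o(\sqrt X)$. Since $\kappa_3>0$, this yields $D_3(X)\sim\kappa_3\sqrt X$. For $k\in\{5,6\}$, the upper bound $D_k(X)\le X$ is trivial. Theorem~\ref{thm:main} supplies at least $c_*X$ integers $n\in[X/2,X]$ with $F(n)=k$ for all large $X$, so $D_k(X)\ge c_*X$, and therefore $D_k(X)\asymp X$.

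\emph{The case $k=4$.} The remaining point is $D_4(X)\gg X$, which is the observation of Erd\H{o}s and Graham cited above. If I had to reprove it, I would do it by counting. By the count for $\mathcal D_2$ and Theorem~\ref{thm:three}, $\mathcal D_2$ and $\mathcal D_3$ together contain only $O(\sqrt X)$ integers up to $X$, and the primes number $O(X/\log X)$. Every other $n\le X$ lies in $\mathcal D_4\cup\mathcal D_5\cup\mathcal D_6$. So it suffices to exhibit a positive-density family of $n$ with $F(n)\le4$. For this I would use a four-factorial identity. Take $n=2m$ with $m$ in a suitable positive-density class. Then
\[
n!\,(n-1)!\,m!\,(m-1)!=n\,m\,\bigl((n-1)!\,(m-1)!\bigr)^2 = 2m^2\bigl((n-1)!\,(m-1)!\bigr)^2 .
\]
This is twice a square, so the naive choice fails and has to be modified. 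One modification is to replace $m!\,(m-1)!$ by a pair $c!\,(c-1)!$ with $s(c)=s(n)$, or by some other pair of factorials absorbing the factor $2$. Finding a positive-density version of such a modification is the one nontrivial step, and I expect it to be the main obstacle, though it is classical. The intended approach is to rely on the citation to Erd\H{o}s and Graham rather than reprove it. Combined with $D_4(X)\le X$, this gives $D_4(X)\asymp X$ and completes the corollary.
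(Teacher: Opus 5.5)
Your treatment of $k=2$, $k=3$, $k\in\{5,6\}$ and $k\ge7$ matches the paper's and is fine. The problem is $k=4$. Citing the Erd\H{o}s--Graham observation is legitimate, since the paper also lists $D_4(X)\gg X$ among the classical facts. But your self-contained attempt stalls exactly where it should succeed, and the ``main obstacle'' you anticipate does not exist.

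The family $n=2m$ is a dead end. Since $s(2m)\ne s(m)$ always, $nm$ is never a square. Your proposed fix, a pair $c!\,(c-1)!$ absorbing the squarefree part of $n$, works at once with $c$ equal to the cofactor. Every nonsquarefree $n$ that is not a square can be written as $n=v^2u$ with $v\ge2$ and $u\ge2$, and then
\[
 n!\,(n-1)!\,u!\,(u-1)!=nu\bigl((n-1)!\,(u-1)!\bigr)^2=\bigl(vu\,(n-1)!\,(u-1)!\bigr)^2 .
\]
The arguments $n>n-1>u>u-1\ge1$ are distinct because $u\le n/4$. Hence $F(n)\le4$ on a set of density $1-6/\pi^2$. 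Removing the $O(\sqrt X)$ integers in $\mathcal D_2\cup\mathcal D_3$ gives $D_4(X)\ge(1-6/\pi^2+o(1))X$.

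This is exactly the paper's argument. No positive-density search for a modified identity is needed. Primes need no separate treatment, since they are squarefree and so never enter this family.
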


\begin{proof}
The statements for $k=2$ and $k\ge7$ were recalled above; see also Section~\ref{subsec:main-term}. If $n=v^2u$ with $v\ge2$ and $u\ge2$, then
\begin{equation}\label{eq:nonsquarefree}
 n!\,(n-1)!\,u!\,(u-1)!=\bigl(vu\,(n-1)!\,(u-1)!\bigr)^2,
\end{equation}
with four distinct arguments, so $F(n)\le4$. By Theorem~\ref{thm:three}, all but $O(\sqrt X)$ nonsquarefree $n\le X$ therefore lie in $\mathcal D_4$, which gives $D_4(X)\ge(1-6/\pi^2+o(1))X$. The remaining assertions are Theorems~\ref{thm:three} and~\ref{thm:main}.
\end{proof}

Corollary~\ref{cor:growth} settles the orders of growth asked for in Problem~374. Whether $D_k(X)/X$ tends to a limit for $k=4,5,6$ remains open; see Section~\ref{sec:questions}.

\subsection*{Outline of the proofs}
Section~\ref{sec:three} is independent of the rest of the paper; the proof of Theorem~\ref{thm:main} uses only the bound~\eqref{eq:all-three} that $O(\sqrt X)$ integers have a three-factor representation.

\emph{Three factorials.} Write a representation as $n!(n-\ell)!c!=\sq$ with $n>n-\ell>c\ge1$. Removing the square $((n-\ell)!)^2$ leaves
\[
 c!\,n(n-1)\cdots(n-\ell+1)=\sq.
\]
For $\ell=1$ this is the condition $s(n)=s(c!)$, which produces the main term. For $\ell\ge2$, Tao~\cite{T} showed that $\ell$ is smaller than any fixed positive power of $n$, and then so is $c$. If $s(c!)$ is small, two terms of the block $n,n-1,\dots,n-\ell+1$ have squarefree parts with small product, and a uniform bound for Pell equations applies. If $s(c!)$ is large, its prime factors exceeding $\ell$ restrict $n$ to few residue classes; in addition, at larger primes $q$ the product must be a square modulo $q$, which by the Weil bound excludes about half of the residues. Gallagher's larger sieve combines these two kinds of information. Balancing the two cases gives the exponent $2/5$.

\emph{Five and six factorials.} We consider $n=ap$ where $p>X^{1-\alpha}$ is prime and $a<X^\alpha$, for a small fixed $\alpha>0$. The identity
\begin{equation}\label{eq:six}
 n!\,(n-1)!\,a!\,(a-1)!\,p!\,(p-1)!=\bigl(n\,(n-1)!\,(a-1)!\,(p-1)!\bigr)^2
\end{equation}
of Erd\H{o}s and Graham shows that $F(n)\le6$, so we must rule out shorter representations for many such $n$. By~\eqref{eq:nonsquarefree}, $n$ must be squarefree. Pairing the four largest factorials in a representation with arguments $n>n-\ell_1>m>m-\ell_2>c$ leads to
\[
 n(n-1)\cdots(n-\ell_1+1)\cdot m(m-1)\cdots(m-\ell_2+1)\cdot c!=\sq.
\]
The preliminary bounds put both block lengths and $c$ below $p$. Since $p>\sqrt n$ divides $n$ exactly once, it must then divide a term $m-j$ of the second block, so $m-j=(t/a)n$ for some $1\le t<a$. We use this relation in four steps.
\begin{enumerate}[label=(\arabic*)]
\item Both blocks are shorter than $X^\varepsilon$ (Lemma~\ref{lem:short}). Otherwise we find a prime $q$ that divides exactly one term of one block, to the first power, which is incompatible with the square condition. We construct a smooth weight that separates the two blocks. The relation $m-j=(t/a)n$ gives this weight a positive average over each period of length at most $a$. After retaining complete periods on which square divisibility is excluded, an equidistribution theorem of Matom\"aki, Radziwi{\l}{\l}, Shao, Tao and Ter\"av\"ainen~\cite{MRST} gives the required prime.
\item If $s(n-1)$ has a prime factor larger than $X^{3\varepsilon}$, the first block has length one (Lemma~\ref{lem:transfer}).
\item If the first block has length one, the second block has length at least two for $O(X^{1/2+5\varepsilon}(\log X)^2)$ values of $n$ (Lemma~\ref{lem:count}). This follows from the Weil bound and the large sieve.
\item If both blocks have length one, the square condition reduces to $atc!=\sq$ with $t<a$. For five factors this is excluded by a condition on $a$ alone, and for four factors it is impossible for squarefree $n$.
\end{enumerate}
Finally we show that the integers $n=ap$ satisfying all these conditions have positive lower density (Proposition~\ref{prop:candidates}); for them~\eqref{eq:six} shows that $F(n)=6$. Choosing instead $a$ with prescribed divisibility properties and using identities due to Straus gives $F(n)=5$.

\emph{Perfect powers.} At the end of~\cite{EG}, Erd\H{o}s and Graham remark that for every $m$ there are infinitely many $m$-th powers that are products of a bounded number of factorials. In Appendix~\ref{app:affine} we give a construction with distinct factorials whose arguments are fixed affine functions of a parameter, using a zero-sum theorem for abelian groups.

\section{Notation and tools}\label{sec:tools}

For a positive integer $v$, $s(v)$ denotes its squarefree part, so that $v/s(v)$ is a square, and $P(v)$ its largest prime factor, with $P(1)=1$. We write $\tau(v)$ for the number of divisors of $v$. For $\ell\ge1$ we use the falling factorial
\[
 (x)_\ell=x(x-1)\cdots(x-\ell+1).
\]
As usual, $\pi(x)$ is the number of primes up to $x$ and $\vartheta(x)=\sum_{p\le x}\log p$. The letters $p$ and $q$ always denote primes. All logarithms are natural, and implied constants are absolute unless their dependence is indicated.

\subsection{Primes}
Besides the prime number theorem and the estimate $\sum_{p\le y}(\log p)/p=\log y+O(1)$, we use Huxley's theorem on primes in short intervals~\cite{Hux} in the following weak form: uniformly for $x^{3/5}\le y\le x/2$,
\begin{equation}\label{eq:huxley}
 \vartheta(x)-\vartheta(x-y)\sim y\qquad(x\to\infty).
\end{equation}

\subsection{Pell equations}
We use the following uniform bound, in the form stated by Tao~\cite[Lemma~2.10]{T}.

\begin{lemma}\label{lem:pell}
Fix $K>0$. For positive integers $e_1,e_2\le X^K$ and a nonzero integer $\Delta$ with $|\Delta|\le X^K$, the equation
\[
 e_1 u^2-e_2 v^2=\Delta
\]
has $X^{o(1)}$ solutions in positive integers $u\le X$ and $v$, uniformly in $e_1,e_2,\Delta$.
\end{lemma}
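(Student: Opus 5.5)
The plan is to linearize the problem in the quadratic field $\mathbb Q(\sqrt D)$ with $D=e_1e_2$, and to count solutions as (ideals of a fixed norm) $\times$ (units in a bounded window). Multiplying the equation by $e_1$ and putting $x=e_1u$, $y=v$, $N=e_1\Delta$ turns it into
\[
 x^2-Dy^2=N,\qquad 1\le D\le X^{2K},\quad 0<|N|\le X^{2K}.
\]
The map $(u,v)\mapsto(x,y)$ is injective, so it suffices to bound the number of solutions of this norm equation. From $u\le X$ we get $0<x\le X^{K+1}$. Since $e_2v^2=e_1u^2-\Delta\le X^{3K+2}$, also $0<y\le X^{O(1)}$. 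So every solution has $x,y$ of size $X^{O(1)}$.

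\emph{Case $D=d^2$ a perfect square.} Here the equation factors as $(x-dy)(x+dy)=N$. The pair of factors is a signed factorization of $N$, so there are at most $2\tau(|N|)$ choices. Each choice determines $(x,y)$, because $2x$ and $2dy$ are recovered as the sum and difference of the factors. The divisor bound $\tau(|N|)\ll_\delta |N|^\delta$ then gives $X^{o(1)}$ solutions.

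\emph{Case $D$ not a square.} Let $K_D=\mathbb Q(\sqrt D)$, a real quadratic field, with ring of integers $\mathcal O$ and fundamental unit $\varepsilon>1$. A solution gives the element $\xi=x+y\sqrt D\in\mathcal O$ of norm $N$, and hence a principal ideal $(\xi)$ of norm $|N|$.

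\begin{itemize}
 \item[(i)] The number of ideals of $\mathcal O$ of norm $|N|$ is at most $\tau(|N|)$. This holds because the ideal-counting function is dominated by the divisor function of the rational integer. Hence there are $X^{o(1)}$ possible ideals.
 \item[(ii)] Two solutions $\xi,\xi'$ giving the same ideal satisfy $\xi'=\pm\varepsilon^j\xi$ for some $j\in\Z$.
 \item[(iii)] The window for $\xi$ is bounded on both sides. Since $x,y>0$, we have $1\le\xi\le X^{O(1)}$. Its conjugate is $\bar\xi=N/\xi$. Within one ideal class the exponent $j$ is therefore confined by $|j|\log\varepsilon\le\log(\xi'/\xi)\ll\log X$.
 \item[(iv)] Because $\varepsilon\ge(1+\sqrt5)/2$, each ideal accounts for only $O(\log X)$ solutions.
\end{itemize}

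Multiplying (i) and (iv), the total is $\tau(|N|)\cdot O(\log X)=X^{o(1)}$, uniformly in $e_1,e_2,\Delta$, as claimed.

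The step needing most care is the bookkeeping in the non-square case. The elements $x+y\sqrt D$ lie in the order $\Z[\sqrt D]$, which may be smaller than $\mathcal O$. Working in the maximal order $\mathcal O$ avoids any dependence on the conductor, since we only need an upper bound, and $\Z[\sqrt D]\subset\mathcal O$ loses nothing. Two points must be checked explicitly:
\begin{itemize}
 \item[(a)] The ideal count must not depend on $D$. This follows from the prime-by-prime bound: for each prime $p^k\,\|\,N$, the number of ideals of norm $p^k$ is at most $k+1$.
 \item[(b)] The unit exponent range must be $O(\log X)$ rather than depend on the regulator. This follows because $\varepsilon$ is bounded below by an absolute constant, while both $\xi$ and $\xi'$ lie in $[1,X^{O(1)}]$.
\end{itemize}
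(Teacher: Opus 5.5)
Your proof is correct. The paper gives no argument of its own for this lemma: it quotes it from Tao's Lemma~2.10. What you have written is a standard self-contained proof of that bound.

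\begin{itemize}
\item The reduction to $x^2-Dy^2=N$ with $D=e_1e_2$ and $N=e_1\Delta$ is injective and keeps every quantity of size $X^{O(1)}$.
\item When $D$ is a perfect square, the divisor bound settles the count.
\item When $D$ is not a square, you isolate the two facts that give uniformity in $e_1,e_2,\Delta$. First, a quadratic maximal order has at most $\tau(|N|)$ ideals of norm $|N|$, whatever its discriminant. Second, every fundamental unit satisfies $\varepsilon\ge(1+\sqrt5)/2$. A fixed principal ideal therefore has only $O(\log X)$ generators in the window $[2,X^{O(1)}]$, however large the regulator is.
\end{itemize}

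Citing the lemma buys brevity. Your version shows explicitly why no dependence on the conductor of $\Z[\sqrt D]$ or on the regulator can enter.

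Three cosmetic points. In (iii) you mean a fixed ideal rather than an ideal class. The exponent bound should read $|j|\log\varepsilon=|\log(\xi'/\xi)|\ll\log X$; the sign in $\xi'=\pm\varepsilon^j\xi$ is $+$ because $\xi,\xi'>0$. The remark $\bar\xi=N/\xi$ is not needed.
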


\subsection{Sieves}
We use the large sieve in the following form; see~\cite[Corollary~2.8]{T}. Let $\mathcal P$ be a nonempty set of primes $q\le\sqrt X$, and suppose that $E\subseteq[1,X]\cap\Z$ lies in at most $\rho(q)$ residue classes modulo $q$ for each $q\in\mathcal P$, where $1\le\rho(q)<q$. Then
\begin{equation}\label{eq:sieve}
 |E|\ll X\Biggl(\sum_{q\in\mathcal P}\frac{q-\rho(q)}{\rho(q)}\Biggr)^{-1}.
\end{equation}
We also use Gallagher's larger sieve~\cite{G} in Section~\ref{sec:three}.

\begin{lemma}[Gallagher]\label{lem:larger}
Let $A\subseteq[1,X]\cap\Z$ with $X>1$. Let $\mathcal P$ be a finite set of primes, and suppose that $A$ lies in at most $\rho(p)$ residue classes modulo $p$ for each $p\in\mathcal P$, where $1\le\rho(p)\le p$. If
\[
 \Delta=\sum_{p\in\mathcal P}\frac{\log p}{\rho(p)}-\log X>0,
\]
then
\[
 |A|\le\frac1\Delta\Bigl(\sum_{p\in\mathcal P}\log p-\log X\Bigr).
\]
\end{lemma}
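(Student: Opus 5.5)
Gallagher's larger sieve is a classical result, so I would reproduce the standard Cauchy--Schwarz argument counting pairs of elements of $A$ that are congruent modulo $p$, and bound those pairs using the fact that their differences are products of few primes. For each $p\in\mathcal P$ and each residue class $r\bmod p$ let $Z(A;p,r)=\#\{a\in A:a\equiv r\pmod p\}$. Since $A$ meets at most $\rho(p)$ classes, Cauchy--Schwarz gives
\[
 \sum_{r\bmod p}Z(A;p,r)^2\ \ge\ \frac{|A|^2}{\rho(p)}.
\]
Multiplying by $\log p$ and summing over $p\in\mathcal P$, the left side equals
\[
 \sum_{p\in\mathcal P}\log p\sum_{\substack{a,b\in A\\ a\equiv b\ (p)}}1
 =|A|\sum_{p\in\mathcal P}\log p+\sum_{\substack{a,b\in A\\ a\ne b}}\ \sum_{\substack{p\in\mathcal P\\ p\mid a-b}}\log p,
\]
where the first term collects the diagonal pairs $a=b$.

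For the off-diagonal pairs, the inner sum is at most $\log|a-b|\le\log X$. Indeed, distinct primes dividing $a-b$ have product dividing $a-b$, and $0<|a-b|<X$. Strictly, the bound is $\log(X-1)<\log X$, which is harmless. Hence the off-diagonal contribution is at most $(|A|^2-|A|)\log X$. Combining,
\[
 |A|^2\sum_{p\in\mathcal P}\frac{\log p}{\rho(p)}\ \le\ |A|\sum_{p\in\mathcal P}\log p+|A|^2\log X-|A|\log X.
\]

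Dividing by $|A|$, which we may assume nonzero since otherwise there is nothing to prove, and rearranging gives $|A|\,\Delta\le\sum_{p\in\mathcal P}\log p-\log X$. Because $\Delta>0$, dividing by $\Delta$ yields the claim. No step is a genuine obstacle. The only points needing care are keeping the diagonal term exactly (rather than absorbing it), so that the $-\log X$ survives in the numerator, and noting that a prime $p\ge X$ in $\mathcal P$ divides no nonzero difference, so such primes contribute only to the diagonal and the argument is unaffected.
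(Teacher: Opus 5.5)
Your proposal is correct and is essentially the paper's proof. Both use Cauchy--Schwarz on the residue-class counts, weight by $\log p$, keep the diagonal term $|A|\sum_{p\in\mathcal P}\log p$ exactly, and bound each ordered off-diagonal pair's contribution by $\log X$; the case $|A|=0$ is also fine, since $\rho(p)\ge1$ gives $\sum_{p\in\mathcal P}\log p\ge\sum_{p\in\mathcal P}\log p/\rho(p)>\log X$, so the right-hand side is positive, a point the paper likewise leaves implicit.
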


\begin{proof}
We may assume $N=|A|\ge1$. For $p\in\mathcal P$ let $N_{p,r}$ be the number of elements of $A$ in the class $r$ modulo $p$. Since $\sum_rN_{p,r}=N$ and at most $\rho(p)$ of these counts are nonzero, Cauchy--Schwarz gives $\sum_rN_{p,r}^2\ge N^2/\rho(p)$. The quantity $\sum_rN_{p,r}^2-N$ counts ordered pairs of distinct elements of $A$ whose difference is divisible by $p$. For each such pair the primes $p\in\mathcal P$ dividing the difference have $\sum\log p\le\log X$. Hence
\[
 \sum_{p\in\mathcal P}\log p\Bigl(\frac{N^2}{\rho(p)}-N\Bigr)\le N(N-1)\log X,
\]
and dividing by $N$ gives the lemma.
\end{proof}

\subsection{Character sums}
For an odd prime $q$ let $\chi_q$ be the quadratic character modulo $q$: it is $0$ at zero, $1$ at a nonzero square, and $-1$ at a nonsquare. We use the Weil bound in the following form~\cite[Theorem~5.41]{LN}: if $f=\lambda\prod_{i=1}^{\ell}(z-r_i)\in\mathbb F_q[z]$ with $\lambda\ne0$ and distinct $r_1,\dots,r_\ell\in\mathbb F_q$, then
\begin{equation}\label{eq:weil}
 \Bigl|\sum_{z\in\mathbb F_q}\chi_q(f(z))\Bigr|\le(\ell-1)\sqrt q.
\end{equation}

\section{Three factorials}\label{sec:three}

\subsection{Representations with nonconsecutive largest arguments}
Let
\[
 E(X)=\#\bigl\{n\le X:\ n!\,(n-\ell)!\,c!=\sq\ \text{for some}\ \ell\ge2\ \text{and}\ 1\le c<n-\ell\bigr\}.
\]

\begin{theorem}\label{thm:nonconsecutive}
For every $\varepsilon>0$ we have $E(X)\ll_\varepsilon X^{2/5+\varepsilon}$.
\end{theorem}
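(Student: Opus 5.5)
We reduce to the block equation $c!\,(n)_\ell=\sq$ with $2\le\ell$ and $c<n-\ell$. By Tao's analysis \cite{T}, $\ell\le X^{\varepsilon}$ and $c\le X^{\varepsilon}$ for all but $O_\varepsilon(X^{2/5})$ of the relevant $n$; we take this as given. Indeed, a prime $q>\ell$ dividing one term of the block to an odd power must divide $c!$, which forces $c\ge q$. So there are only $X^{O(\varepsilon)}$ choices of the pair $(\ell,c)$. Fix such a pair and put $d=s(c!)$. We split according to the size of $d$, with a threshold $D=X^{\theta}$ to be optimized.

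\emph{Small $d$, say $d\le D$.} Write $n-i=e_iu_i^2$ with $e_i=s(n-i)$. Every prime dividing some $e_i$ to an odd total power in the block and exceeding $\ell$ occurs in only one term, so it divides $d$. Hence $e_i$ is determined, up to $\ell$-smooth factors dividing $\prod_{j\ne i}(i-j)$, by a divisor of $d$. Take two terms $n$ and $n-1$. Their squarefree parts $e_0,e_1$ then range over $\ll X^{\varepsilon}\tau(d)^2$ pairs, each with $e_0,e_1\ll D\,X^{O(\varepsilon)}$. For each pair, $e_0u^2-e_1v^2=1$ with $u\le\sqrt X$ has $X^{o(1)}$ solutions by Lemma~\ref{lem:pell}, and $u$ determines $n$. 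This gives $X^{o(1)}$ values of $n$ per $d$. That alone is far too weak only if we sum over all $d$; but $d$ is determined by $c\le X^\varepsilon$, so in fact this case contributes only $X^{O(\varepsilon)}$.

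The real issue is therefore subtler. Tao's reduction should be used only for $\ell$, while $c$ can be large, up to $n$. I will instead let $c$ be free, with $d=s(c!)$ and $c$ up to $X$. In the small case $d\le D$, the number of $c$ with $s(c!)\le D$ is itself small, $\ll D^{1/2+o(1)}$ heuristically from the main-term analysis, since $s(c!)$ grows like the product of primes in $(c/2,c]$ up to smooth factors. Combined with Pell this gives $D^{O(1)}X^{o(1)}$; we expect a bound of order $D\,X^{o(1)}$.

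\emph{Large $d>D$.} Every prime $p\mid d$ with $p>\ell$ must divide exactly one term $n-i$ to an odd power. So $n$ lies in at most $\ell$ residue classes modulo $p$; that is, $\rho(p)\le\ell$ for these $p$. In addition, for each prime $q$ in a range $(Q,2Q]$ not dividing $d$, we need $\chi_q(d)\chi_q((n)_\ell)=1$ or $0$. By \eqref{eq:weil}, this cuts the admissible residues to $q/2+O(\ell\sqrt q)$, so $\rho(q)\le q/2+O(\ell\sqrt q)$. Feed both families into Lemma~\ref{lem:larger}. The primes of $d$ give $\sum\log p/\rho(p)\ge\log d/\ell$, but this is only useful when $\log d$ greatly exceeds $\ell\log X$.

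Rather than the larger sieve, it is better to use the primes $p\mid d$ as a modulus constraint: $n$ lies in $\le\ell^{\omega(d)}$ classes modulo $d'$, the $\ell$-rough part of $d$. This gives $\ll X^{\varepsilon}(X/d'+1)$ values of $n$. The quadratic-residue information from the primes $q\sim Q$ then contributes a further saving of roughly $(X/d')\,Q^{-1}$-type via the larger sieve, with $Q$ around $X^{1/5}$ to keep Weil errors acceptable.

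Balancing the two cases gives $D\,X^{o(1)}$ against $X^{1+o(1)}/(D\cdot\text{sieve gain})$. With a sieve gain of $X^{1/5}$ this yields $D=X^{2/5}$.

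The main obstacle is the large-$d$ case. The difficulty is combining the congruence restriction modulo the large primes of $d$ with the Weil-based half-density restriction in a single larger-sieve application, uniformly over all $c$ sharing a value of $d$, and with $\sum_{p\mid d}$ terms not swamped by the $-\log X$ term. I would first try applying Lemma~\ref{lem:larger} with $\mathcal P$ consisting of the primes dividing $d$ that exceed $X^{\varepsilon}$, together with all $q\le X^{2/5}$ carrying the Weil bound, and then optimize.
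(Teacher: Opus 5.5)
Your proposal does not reach a proof. The decisive missing idea is that the threshold on $d=s(c!)$ must depend on the block length: the paper splits at $s(c!)\lessgtr X^{\ell/5}$, not at a fixed $X^{\theta}$. You observe the reason yourself. Each prime $p>\ell$ of $d$ leaves $\ell$ residue classes, so it contributes only $\log p/\ell$ in Lemma~\ref{lem:larger}. With $d>X^{\theta}$ the total is about $\theta\log X/\ell$, which is useless for large $\ell$.

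Your Chinese-remainder fallback does not help. The count is $\ell^{\omega(d')}(X/d'+1)$, not $X^{\varepsilon}(X/d'+1)$. Every prime of $d'$ lies in $(\ell,c]$ with $c\ll\ell\log X$, so it saves only a factor $p/\ell\ll\log X$. For large $\ell$ the factor $\ell^{\omega(d')}$ therefore nearly cancels the gain from $d'$.

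Your final plan fails as well. Since $c\ll\ell\log X=X^{o(1)}$, $d$ has no prime factors above $X^{\varepsilon}$. The Weil-bound primes up to $X^{2/5}$ certify only about $\tfrac45\log X$, so $\Delta<0$.

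With the threshold $X^{\ell/5}$ everything fits. The primes $p>\ell$ of $s(c!)$ contribute at least $\tfrac15\log X-O(1)$. The primes $q\in(X^{3\eta},X^{2/5+4\eta}]$ contribute $(\tfrac45+2\eta-o(1))\log X$ via~\eqref{eq:weil}. Hence $\Delta\ge\eta\log X$, and each pair $(c,\ell)$ allows $\ll X^{2/5+4\eta}$ values of $n$.

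The small case also contains a real error. You claim the squarefree parts of the fixed pair $n,n-1$ are divisors of $d$ up to $X^{O(\varepsilon)}$ choices. This is false. Their $\ell$-smooth parts are not constrained individually; only the total parity over the block is. So they range over up to $2^{\pi(\ell)}$ values, as large as $\min(X,e^{\vartheta(\ell)})$, and the bound $e_0,e_1\ll D\,X^{O(\varepsilon)}$ is unjustified.

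The paper instead bounds the product of all $\ell$ squarefree parts by $s(c!)\,\ell^{\ell}e^{O(\ell)}$, since a prime $p\le\ell$ divides at most $\ell/p+1$ terms. It then takes the two \emph{smallest}, whose product is $\ll\ell^2s(c!)^{2/\ell}\ll X^{2/5+2\eta}$ below the threshold. Finally it counts all $X^{2/5+o(1)}$ pairs $(d_i,d_j)$ with that product bound, applying Lemma~\ref{lem:pell} to $d_iu_i^2-d_ju_j^2=j-i$ without fixing $c$ or $\ell$. This averaging over the block is what controls the small-prime contributions.

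Lastly, your retreat to ``$c$ can be as large as $n$'' is mistaken. Every prime in $(c/2,c]$ divides $c!$ exactly once and hence divides $(n)_\ell$, so $c\ll\ell\log n\le X^{\eta}$ for every representation. The argument in your first paragraph ($c\ge q$) bounds $c$ from below, not from above.
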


Removing the square $((n-\ell)!)^2$, a representation counted by $E(X)$ satisfies
\begin{equation}\label{eq:three-block}
 c!\,(n)_\ell=\sq.
\end{equation}
In Tao's terminology~\cite[Definition~1.2(iii)]{T}, this says that $\{N+1,\dots,N+H\}$ with $N=n-\ell$ and $H=\ell$ is an interval of type $F_3$, the relevant factorial being $c!$ with $1\le c<N$. By~\cite[Lemma~4.2]{T}, every interval of type $F_3$ satisfies
\[
 H\le\exp\bigl((\log N)^{2/3+o(1)}\bigr)\qquad(N\to\infty),
\]
and by~\cite[Lemma~4.1]{T} we have $H<N$, so $N>n/2$. The bound on the smallest argument is elementary, as in Lemma~\ref{lem:smallest} below: every prime in $(c/2,c]$ occurs exactly once in $c!$, and hence divides $(n)_\ell$, so that $\vartheta(c)-\vartheta(c/2)\le\ell\log n$. Consequently
\begin{equation}\label{eq:three-short}
 \ell\le\exp\bigl((\log n)^{2/3+o(1)}\bigr),\qquad c\ll\ell\log n.
\end{equation}
In particular, for every fixed $\eta>0$ and every sufficiently large $X$, all such representations with $n\le X$ satisfy $\ell,c\le X^\eta$; the finitely many exceptions with small $n$ do not affect Theorem~\ref{thm:nonconsecutive}.

\begin{proof}[Proof of Theorem~\ref{thm:nonconsecutive}]
Fix $0<\eta<1/100$ and let $X$ be large, so that $\ell,c\le X^\eta$ in every representation under consideration.

\step{Case 1: $s(c!)\le X^{\ell/5}$.}
Write $n-i=d_iu_i^2$ with $d_i=s(n-i)$ for $0\le i<\ell$. A prime $p>\ell$ divides at most one term of the block, so by~\eqref{eq:three-block} it divides one of the $d_i$ if and only if it divides $s(c!)$. A prime $p\le\ell$ divides at most $\ell/p+1$ terms. Hence
\[
 \log\prod_{i=0}^{\ell-1}d_i\le\log s(c!)+\sum_{p\le\ell}\Bigl(\frac\ell p+1\Bigr)\log p=\log s(c!)+\ell\log\ell+O(\ell).
\]
Choose $i<j$ so that $d_i,d_j$ are the two smallest squarefree parts. Then
\[
 d_id_j\le\Bigl(\prod_{k=0}^{\ell-1}d_k\Bigr)^{2/\ell}\ll\ell^2s(c!)^{2/\ell}\ll X^{2/5+2\eta}.
\]
Subtracting the corresponding terms gives
\[
 d_iu_i^2-d_ju_j^2=j-i\ne0.
\]
We count these Pell equations without fixing $c$ or $\ell$. There are $X^{2/5+2\eta+o(1)}$ pairs of positive integers $(d_i,d_j)$ with product $O(X^{2/5+2\eta})$, and at most $X^\eta$ values of $j-i$. For each choice, Lemma~\ref{lem:pell} gives $X^{o(1)}$ solutions $(u_i,u_j)$, and $n=d_iu_i^2+i$ is then determined up to at most $X^\eta$ choices of $i$. Case~1 therefore contributes at most $X^{2/5+4\eta+o(1)}$ values of $n$.

\step{Case 2: $s(c!)>X^{\ell/5}$.}
Fix $c$ and $\ell$, and let $A_{c,\ell}$ be the set of possible integers $n\le X$ for this pair. We apply Lemma~\ref{lem:larger} with two sets of primes.

First let $\mathcal P_0$ be the set of primes $p>\ell$ dividing $s(c!)$. Such a prime occurs to an odd power in $c!$, so it must divide a term of the block; since $p>\ell$, this confines $n$ to the $\ell$ classes $0,1,\dots,\ell-1$ modulo $p$. These primes contribute
\[
 \sum_{p\in\mathcal P_0}\frac{\log p}{\ell}\ge\frac{\log s(c!)-\vartheta(\ell)}{\ell}\ge\frac{\log X}5-O(1).
\]

Next let $\mathcal P_1$ be the set of primes $q$ with $Y<q\le Q$, where
\[
 Y=X^{3\eta},\qquad Q=X^{2/5+4\eta}.
\]
These primes exceed $c$ and $\ell$; in particular $\mathcal P_1$ is disjoint from $\mathcal P_0\subseteq[1,c]$. For $q\in\mathcal P_1$, every $n\in A_{c,\ell}$ makes $f(n)$ a square modulo $q$, where $f(z)=c!\,(z)_\ell$ has the $\ell$ distinct roots $0,1,\dots,\ell-1$ and a nonzero leading coefficient modulo $q$. The number of residues $z$ for which $f(z)$ is a square or zero is
\[
 \rho(q)=\frac12\Bigl(q+\ell+\sum_{z\in\mathbb F_q}\chi_q(f(z))\Bigr)\le\frac q2+\frac{\ell+(\ell-1)\sqrt q}2
\]
by~\eqref{eq:weil}. As $\ell/\sqrt q\le X^{-\eta/2}$, this gives $1/\rho(q)\ge(2/q)(1-O(X^{-\eta/2}))$, and so
\[
 \sum_{q\in\mathcal P_1}\frac{\log q}{\rho(q)}\ge(2-o(1))\log\frac QY+O(1)=\Bigl(\frac45+2\eta-o(1)\Bigr)\log X.
\]

Adding the two contributions and subtracting $\log X$, the denominator $\Delta$ in Lemma~\ref{lem:larger} is at least $\eta\log X$ for large $X$. The numerator is at most $\vartheta(c)+\vartheta(Q)\ll Q$. Hence
\[
 |A_{c,\ell}|\ll_\eta\frac{X^{2/5+4\eta}}{\log X}.
\]
Summing over the at most $X^{2\eta}$ pairs $(c,\ell)$, Case~2 contributes $O_\eta(X^{2/5+6\eta})$ values of $n$.

Combining the two cases and taking $\eta$ small in terms of $\varepsilon$ proves the theorem.
\end{proof}

\begin{remark}
The exponent $2/5$ comes from balancing the two cases. If the cases are separated at $s(c!)=X^{\sigma\ell}$, the first contributes $X^{2\sigma+o(1)}$, while in the second the primes dividing $s(c!)$ supply $\sigma\log X$ of the sieve denominator and the quadratic residue conditions must supply the rest, which leads to the bound $X^{(1-\sigma)/2+o(1)}$. The two exponents agree when $\sigma=1/5$. Without the first set of primes the larger sieve alone would give only $X^{1/2+o(1)}$.
\end{remark}

\subsection{The main term}\label{subsec:main-term}

\begin{proof}[Proof of Theorem~\ref{thm:three}]
For $a\in\mathcal S$ let $c(a)$ be the least $c\ge1$ with $s(c!)=a$. A consecutive representation $n!(n-1)!c!=\sq$ exists if and only if $n=au^2$ for some $a\in\mathcal S$ with $c(a)<n-1$. The families $\{au^2\}$ are disjoint, since $a=s(n)$.

Every prime in $(c/2,c]$ divides $s(c!)$, so $s(c!)\ge e^{\delta c}$ for some $\delta>0$ and all large $c$. Hence
\begin{equation}\label{eq:S-count}
 \#\{a\in\mathcal S:a\le Z\}\ll\log(2Z),\qquad\sum_{a\in\mathcal S}a^{-1/2}<\infty.
\end{equation}
The same estimate gives $c(a)\ll\log(2a)$, so $c(a)<a-1$ for all but finitely many $a$, and the condition $c(a)<n-1$ excludes only $O(1)$ integers $n$. The number of $n\le X$ of this form is therefore
\[
 \sum_{\substack{a\in\mathcal S\\ a\le X}}\floor{\sqrt{X/a}}+O(1)=(\kappa_3+1)\sqrt X+O(\log X),
\]
since the rounding errors are $O(\log X)$ by~\eqref{eq:S-count}, and so is the tail:
\[
 \sqrt X\sum_{\substack{a\in\mathcal S\\ a>X}}a^{-1/2}\ll\sum_{j\ge0}2^{-j/2}\log(2^{j+1}X)\ll\log X.
\]
Every other $n$ admitting a three-factor representation is counted by $E(X)$. By Theorem~\ref{thm:nonconsecutive}, the number of $n\le X$ admitting a three-factor representation is therefore
\begin{equation}\label{eq:all-three}
 (\kappa_3+1)\sqrt X+O_\varepsilon(X^{2/5+\varepsilon}).
\end{equation}

It remains to remove the $n$ with $F(n)=2$. By the theorem of Erd\H{o}s and Selfridge~\cite[Theorem~1]{ES}, a product of two or more consecutive positive integers is never a perfect power. Hence $n!r!=\sq$ with $r<n$ forces $r=n-1$ and $n=\sq$; that is, $\mathcal D_2$ consists of the squares $n\ge4$. These squares were included in~\eqref{eq:all-three} (take $c=1$), and there are $\sqrt X+O(1)$ of them up to $X$. Subtracting them proves Theorem~\ref{thm:three}.
\end{proof}

\section{Four and five factorials: two blocks}\label{sec:two-blocks}

To prove that many integers require five or six factors, we must exclude shorter representations. The integers with a three-factor representation have already been counted in~\eqref{eq:all-three}; for squarefree integers, this leaves representations with four or five factorials to consider. Pairing the four largest factorials reduces the problem to two blocks of consecutive integers. Write the arguments of a five-factor representation as
\[
 n>n-\ell_1>m>m-\ell_2>c\ge1.
\]
Since $n!(n-\ell_1)!=(n)_{\ell_1}((n-\ell_1)!)^2$ and $m!(m-\ell_2)!=(m)_{\ell_2}((m-\ell_2)!)^2$, the representation is equivalent to
\begin{equation}\label{eq:two-block}
 (n)_{\ell_1}\,(m)_{\ell_2}\,c!=\sq,\qquad n>n-\ell_1>m>m-\ell_2>c\ge0.
\end{equation}
A four-factor representation corresponds to~\eqref{eq:two-block} with $c=0$. We call
\[
 B_1=\{n-\ell_1+1,\dots,n\},\qquad B_2=\{m-\ell_2+1,\dots,m\}
\]
the \emph{upper} and \emph{lower block}; they have lengths $\ell_1,\ell_2\ge1$. When $\ell_1=\ell_2=1$ the four largest arguments form two consecutive pairs.

\begin{lemma}\label{lem:smallest}
Every solution of~\eqref{eq:two-block} satisfies $c\ll(\ell_1+\ell_2)\log n+1$.
\end{lemma}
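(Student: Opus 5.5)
The plan is to copy the elementary argument used for \eqref{eq:three-short}, now with two blocks. If $c\le C_0$ for a suitable absolute constant $C_0$ there is nothing to prove; this is what the $+1$ absorbs. So assume $c$ is large. Every prime $p\in(c/2,c]$ divides $c!$ exactly once, because $2p>c$. Fix such a $p$. By \eqref{eq:two-block}, the exponent of $p$ in $(n)_{\ell_1}(m)_{\ell_2}c!$ is even. Since $v_p(c!)=1$ is odd, $p$ must divide $(n)_{\ell_1}(m)_{\ell_2}$, and hence divide at least one term of $B_1\cup B_2$.

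Next I bound the total size of these primes by a divisibility count. Put
\[
 M=\prod_{x\in B_1\cup B_2}x ,
\]
which is a product of $\ell_1+\ell_2$ integers, each at most $n$. The primes in $(c/2,c]$ are distinct and each divides $M$, so their product divides $M$. Therefore
\[
 \vartheta(c)-\vartheta(c/2)\le\log M\le(\ell_1+\ell_2)\log n .
\]
No condition such as $p>\ell_i$ is needed here. The inequality only uses that each such prime divides $M$ at least once, so it remains valid even if a prime divides several terms.

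By the prime number theorem (Chebyshev's bound suffices), $\vartheta(c)-\vartheta(c/2)\ge\delta c$ for some absolute $\delta>0$ and all $c\ge C_0$. This gives $c\le\delta^{-1}(\ell_1+\ell_2)\log n$, and with the small-$c$ case this proves the lemma. The case $c=0$, the four-factor case, is trivial. I expect no real obstacle: the only point to check is that ``exactly once in $c!$'' forces division of the block product, and that follows directly from the parity of $v_p$.
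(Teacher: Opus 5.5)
Your proposal is correct and is essentially the paper's own proof. The paper argues the same way: each prime in $(c/2,c]$ occurs exactly once in $c!$, so it divides $(n)_{\ell_1}(m)_{\ell_2}$; hence $\vartheta(c)-\vartheta(c/2)\le(\ell_1+\ell_2)\log n$, and the prime number theorem finishes. Your added details (no need for $p>\ell_i$, and the $+1$ absorbing small $c$ and the case $c=0$) are accurate and only spell out what the paper leaves implicit.
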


\begin{proof}
Every prime in $(c/2,c]$ occurs exactly once in $c!$ and therefore divides $(n)_{\ell_1}(m)_{\ell_2}$. Hence $\vartheta(c)-\vartheta(c/2)\le(\ell_1+\ell_2)\log n$, and the claim follows from the prime number theorem.
\end{proof}

\begin{lemma}\label{lem:coarse}
Every solution of~\eqref{eq:two-block} satisfies
\[
 \ell_1\ll n^{3/5},\qquad\ell_2\ll\ell_1\log n+n^{3/5}.
\]
In particular $L=\max(\ell_1,\ell_2)\le n^{2/3}$ for all sufficiently large $n$.
\end{lemma}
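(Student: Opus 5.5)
We argue with primes in the upper block, using Huxley's theorem~\eqref{eq:huxley}, and the bound for $\ell_2$ then follows by the same argument applied to the lower block. The guiding principle is that a prime $p$ that divides $(n)_{\ell_1}(m)_{\ell_2}c!$ to an odd power cannot exist, since~\eqref{eq:two-block} is a square. So we look for a prime that occurs exactly once in the whole product.

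\emph{Bound for $\ell_1$.} Suppose $\ell_1\ge Cn^{3/5}$ for a large constant $C$, and put $y=\min(\ell_1,n/4)$. The upper block contains the interval $(n-y,n]$. By~\eqref{eq:huxley} with $x=n$, which is allowed since $y\ge n^{3/5}$, we get $\vartheta(n)-\vartheta(n-y)\sim y$, so this interval contains a prime $p>3n/4$.

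\begin{itemize}
\item Since $p>n/2$ and the block lies in $(0,n]$, the prime $p$ divides exactly one term of $B_1$, and only to the first power because $p^2>n$.
\item Every element of $B_2$ is smaller than $n-\ell_1<p$, and $c<p$, so $p$ divides neither $(m)_{\ell_2}$ nor $c!$.
\end{itemize}

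Hence $p$ divides the product exactly once, which is a contradiction. This gives $\ell_1\ll n^{3/5}$.

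The one subtle point is when $n-\ell_1<3n/4$, that is, when the block is very long. Here we argue the same way: the block then contains all of $(3n/4,n]$, and the prime number theorem supplies a prime $p$ in that range, which again occurs exactly once. We should also make sure that $\ell_1\le n$ is at least $n^{3/5}$ whenever Huxley's theorem is invoked. If instead $y=n/4$, Huxley still applies because $y\le x/2$.

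\emph{Bound for $\ell_2$.} Now suppose $\ell_2$ is large. Every prime $p\in(m-\ell_2,m]$ with $p>\ell_1$ and $p^2>m$ has odd valuation in $(m)_{\ell_2}$. It also does not divide $c!$, since $c<m-\ell_2<p$. So it must divide $(n)_{\ell_1}$. The primes dividing $(n)_{\ell_1}$ that exceed $\ell_1$ each divide exactly one term of the upper block. Their product therefore has logarithm at most $\ell_1\log n$. Consequently
\[
 \sum_{\substack{p\in(m-y,m]\\ p>\max(\ell_1,\sqrt m)}}\log p\le\ell_1\log n,
\]
where $y=\min(\ell_2,m/4)$.

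If $y\ge m^{3/5}$, Huxley's theorem makes the left side $\gg y$, once the primes up to $\max(\ell_1,\sqrt m)$ are discarded. The interval sits near $m$, so the cutoff matters only when $\ell_1$ is comparable to $m$. In that case we already have $\ell_2\le m\ll\ell_1\log n$. We conclude $\ell_2\ll\ell_1\log n+m^{3/5}\le\ell_1\log n+n^{3/5}$. The same fallback to the prime number theorem handles very long blocks.

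\emph{The final claim.} Combining the two bounds gives $L\ll n^{3/5}\log n$, which is at most $n^{2/3}$ for all large $n$.

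The main obstacle is the bookkeeping at the interval endpoints and in the regime of small $m$. If $m<n^{3/5}$, the block $B_2$ lies below $n^{3/5}$ and trivially $\ell_2\le m<n^{3/5}$, so that case is immediate.
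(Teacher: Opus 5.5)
Your proof is correct and follows the paper's argument. In the upper block, a prime in $(n-\ell_1,n]$ would occur exactly once in the product, and Huxley's theorem then bounds $\ell_1$; in the lower block, the primes just below $m$ are forced to divide $(n)_{\ell_1}$, which gives $\ell_2\ll\ell_1\log n+n^{3/5}$.

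Two blemishes are cosmetic:
\begin{itemize}
\item Your general claim that every prime $p\in(m-\ell_2,m]$ with $p^2>m$ has odd valuation in $(m)_{\ell_2}$ can fail once $2p\le m$, since then $p$ and $2p$ both lie in $B_2$. You only apply it to $p>3m/4$, where it holds.
\item The cutoff $p>\max(\ell_1,\sqrt m)$ is unnecessary. The product of the distinct primes dividing $(n)_{\ell_1}$ is at most $(n)_{\ell_1}\le n^{\ell_1}$ in any case, which is how the paper argues.
\end{itemize}
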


\begin{proof}
If $\ell_1\ge n/2$, a prime in $(n/2,n]$ (which exists by Bertrand's postulate) divides exactly one term of $B_1$ and nothing else in~\eqref{eq:two-block}, which is impossible. Hence $\ell_1<n/2$, and the same argument shows that $(n-\ell_1,n]$ contains no prime. By~\eqref{eq:huxley}, $\ell_1\ll n^{3/5}$.

For the lower block we may assume $\ell_2\ge m^{3/5}$ and $m$ large. Put $y=\min(\ell_2,m/2)$, so that $m^{3/5}\le y\le m/2$ and $\ell_2\le2y$. Each prime in $(m-y,m]$ divides exactly one term of $B_2$, does not divide $c!$, and therefore divides $(n)_{\ell_1}$. By~\eqref{eq:huxley},
\[
 \ell_2\le2y\ll\vartheta(m)-\vartheta(m-y)\le\log((n)_{\ell_1})\le\ell_1\log n.\qedhere
\]
\end{proof}

To force the upper block to have length one, we will choose $n$ so that $s(n-1)$ has a large prime factor. The next lemma gives the restriction on this prime factor when $\ell_1\ge2$.

\begin{lemma}\label{lem:transfer}
Let $n=ap$ with $p$ prime, $p>\sqrt n$ and $p>\max(\ell_1,\ell_2,c)$, and suppose that~\eqref{eq:two-block} holds with $\ell_1\ge2$. Then
\[
 P\bigl(s(n-1)\bigr)\le\max(\ell_1,c,a\ell_2).
\]
\end{lemma}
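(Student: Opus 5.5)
Let $q$ be a prime with $q\mid s(n-1)$ and $q>\max(\ell_1,c,a\ell_2)$; we aim for a contradiction. Since $q$ divides $s(n-1)$, it divides $n-1$ to an odd power. Because $\ell_1\ge2$, the term $n-1$ belongs to the upper block $B_1$. The plan is to track the total exponent of $q$ in~\eqref{eq:two-block} and show that it is odd.

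\emph{Contributions from $B_1$ and $c!$.} Two distinct terms of $B_1$ differ by less than $\ell_1<q$, so $q$ divides no term of $B_1$ other than $n-1$. Thus $B_1$ contributes exactly $v_q(n-1)$, which is odd. Since $q>c$, we have $q\nmid c!$. The square condition therefore forces $q$ to divide some term $m-j$ of $B_2$ with $0\le j<\ell_2$. Since $q>\ell_2$, this term is unique, and $v_q(m-j)$ must be odd, in particular positive.

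\emph{Contribution from $B_2$.} Now use the prime $p$. We have $p>\sqrt n$ and $n=ap$, so $p\nmid a$ and $p$ divides $n$ exactly once. As $p>\ell_1$, the term $n$ is the only term of $B_1$ divisible by $p$. Also $p>c$, so $p$ must divide some term $m-i$ of $B_2$. We have $m-i<n-\ell_1<n$, so $m-i=tp$ with $1\le t<a$. Since $q\mid n-1$, we get $ap\equiv1\pmod q$, and hence $p$ is invertible mod $q$ with $p\equiv a^{-1}$. Then
\[
 m-j=tp-(j-i)\equiv a^{-1}\bigl(t-a(j-i)\bigr)\pmod q.
\]
So $q$ divides $t-a(j-i)$. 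But $|t-a(j-i)|\le t+a(\ell_2-1)<a+a(\ell_2-1)=a\ell_2<q$. This forces $t=a(j-i)$, which is impossible because $0<t<a$ makes $t$ not a multiple of $a$. This is the desired contradiction.

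\emph{Where the difficulty lies.} The key step is this last congruence. The point is to convert the information $q\mid m-j$ into a small integer divisible by $q$, using the $p$-divisibility link $m-i=tp$ together with $ap\equiv1\pmod q$. The remaining checks are routine: that $q\nmid a$, which follows from $q>a\ell_2\ge a$, and that the indices stay within the stated ranges.
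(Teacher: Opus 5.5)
Your proof is correct and follows essentially the same route as the paper: use $p$ to locate the term $tp$ of $B_2$ with $1\le t<a$, then show that $q\mid n-1$ cannot divide any term of $B_2$ because this would force $q$ to divide a nonzero integer of absolute value less than $a\ell_2$. Your congruence with $p\equiv a^{-1}\pmod q$ is a rephrasing of the paper's integer combination $a(m-i)-t(n-1)=a(j-i)+t$, with the roles of $i$ and $j$ swapped.
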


\begin{proof}
The prime $p$ divides $n$ exactly once, divides no other term of $B_1$, and does not divide $c!$. It must therefore divide a term of $B_2$. Since $p>\ell_2$, that term is unique:
\[
 m-j=tp\qquad\text{for some }0\le j<\ell_2\text{ and }1\le t<a,
\]
where $t<a$ because $m<n$.

Now let $q>\max(\ell_1,c,a\ell_2)$ be a prime dividing $n-1$ to an odd power. It divides no other term of $B_1$ and does not divide $c!$. If it divided a term $m-i$ of $B_2$, it would divide
\[
 a(m-i)-t(n-1)=a(j-i)+t.
\]
This integer is nonzero since $0<t<a$, and has absolute value less than $a\ell_2<q$, which is a contradiction. So $q$ occurs to an odd power in~\eqref{eq:two-block}, which is impossible.
\end{proof}

\section{Long blocks}\label{sec:long}

\begin{lemma}\label{lem:short}
Let $0<\alpha<\varepsilon\le10^{-3}$. If $n$ is sufficiently large, $P(n)>n^{1-\alpha}$, and~\eqref{eq:two-block} holds, then $\max(\ell_1,\ell_2)<n^\varepsilon$.
\end{lemma}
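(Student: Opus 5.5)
We argue by contradiction. Suppose $L=\max(\ell_1,\ell_2)\ge n^\varepsilon$. By Lemma~\ref{lem:coarse} we also have $L\le n^{2/3}$, and by Lemma~\ref{lem:smallest} we have $c\ll L\log n$. Write $n=ap$ with $p=P(n)>n^{1-\alpha}$, so $a<n^\alpha$. Since $L\le n^{2/3}<p$ and $c<p$, the first step of the proof of Lemma~\ref{lem:transfer} applies verbatim. The prime $p$ divides exactly one term of $B_2$, namely $m-j=tp=(t/a)n$ with $1\le t<a$. Thus the lower block sits at the rational multiple $t/a$ of $n$, up to an error of at most $L$.

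The goal is to produce a prime $q$ that divides exactly one term of $B_1\cup B_2$, to the first power, and does not divide $c!$. Such a $q$ occurs to an odd power in~\eqref{eq:two-block}, which is a contradiction. I would take $q$ in a range $(H,2H]$ with $H$ somewhat below $L$, say $H=L^{1-\delta}$, while still $H\gg L\log n$. Then $q>c$, so $q\nmid c!$. Also $q$ divides about $\ell_i/q$ terms of each block.

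The obstruction is that $q$ typically divides several terms, and it may divide terms of both blocks simultaneously. So I would not use a single $q$. Instead I would fix the longer block, say $B_i$ with $\ell_i=L$, and count weighted pairs (prime $q$, term of $B_i$ divisible by $q$). The weights are chosen so that $q$ hits one term of $B_i$ and no term of the other block. Whether $q$ divides a term of $B_1$ is governed by $n \bmod q$, and for $B_2$ by $m-j=tn/a \bmod q$. So the joint condition is a condition on the residue of $n/q$ modulo $1$: the fractional parts $\{n/q\}$ and $\{tn/(aq)\}$ must lie in prescribed short intervals. This is the role of the smooth weight: a function of $x=\{n/(aq)\}\in\mathbb R/\mathbb Z$, periodic with period dividing $a$ in the appropriate scaling, that detects "$q$ divides a term of $B_i$ but no term of the other block". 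Because $t/a$ has denominator at most $a$, the two conditions are compatible on a positive proportion of each period, so the weight has positive mean of size $\gg 1/a\cdot(\text{block ratio})$, which is $\gg n^{-\alpha}$ times $L/H$. We also drop the $q$ for which $q^2$ divides some term. By keeping only complete periods, their contribution is $O(L/H^2)$ per $q$, hence negligible after summing over $q\sim H$.

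The main step, and the expected obstacle, is to show that
\[
 \sum_{H<q\le2H}\log q\,w\bigl(n/(aq)\bigr)
\]
is asymptotic to its expected value, namely the mean of $w$ times $\vartheta(2H)-\vartheta(H)$. This requires equidistribution of $n/q$ modulo $a$ (equivalently of $\{N/q\}$ with $N=n/a$ or $n$) as $q$ ranges over primes in $(H,2H]$. Here $H$ is a tiny power $n^{\varepsilon'}$ while $N$ is as large as $n$, which is far outside the classical Vinogradov range. This is exactly what the Matom\"aki--Radziwi{\l}{\l}--Shao--Tao--Ter\"av\"ainen estimate~\cite{MRST} supplies: equidistribution of $x/p$ modulo $1$ for primes $p$ in short ranges, with $\log H\gg\log x$ allowed at a small power. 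I would expand the smooth weight in Fourier series, truncated at frequencies $\ll n^{O(\alpha)}$ (the derivatives of $w$ are controlled in terms of $a<n^\alpha$ and $H/L$). I would then apply~\cite{MRST} to each exponential sum $\sum_q e(hN/q)\log q$ and obtain a saving of a power of $\log$, or better. The condition $\alpha<\varepsilon$ ensures that the loss from the $O(a)$ Fourier frequencies and the $1/a$ mean is beaten by this saving, provided $\varepsilon$ is small enough for~\cite{MRST} to apply. The main term is then positive, so some prime $q$ with the desired property exists, which gives the contradiction.
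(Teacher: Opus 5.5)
Your skeleton matches the paper's. You locate the lower block through $m-j=\theta n$ with $\theta=t/a$, then use a smooth weight in $(n/q,\theta n/q)$ and the equidistribution estimate of~\cite{MRST} to find a prime $q>\max(\ell_1,\ell_2,c)$ dividing exactly one term of $B_1\cup B_2$ to the first power. But there is a genuine gap in the step you treat as routine, the exclusion of $q^2$-divisibility. ``$O(L/H^2)$ per $q$'' is only the probability under a random model. To use it you must control how often $M/q^2$ (with $M=n$ or $\theta n$) lies within $L/q^2$ of an integer, and nothing in your argument does this. Trivially, a term of size at most $n$ is divisible by $q^2$ for fewer than $\log n/(2\log H)\le 1/(2\varepsilon)$ primes $q>H\ge n^\varepsilon$. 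So you can only bound the bad primes by $O(L/\varepsilon)$, whereas your main term is at best of order $L/\log n$; you lose a factor $\log n$.

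The paper fills this gap by making $M/q^2$ a third coordinate of the weight, $W=H(x,y)G(\zeta)$, where $G$ vanishes within a \emph{fixed} distance $\tau_0$ of the integers; a window of width $L/q^2$ would have polynomially large derivatives. It then applies~\cite{MRST} with phases quadratic in $1/q$. It must also check that the selection mass does not sit where $G=0$. After $u=n/v$, the range $[n/(2Q),n/Q]$ splits into periods of length $A$ (the denominator of $\theta$), each carrying mass $\gg A\delta$, and $Mu^2/n^2$ varies by at most $2A/Q$ across a period. This is where $\alpha<\varepsilon$ enters: $A<n^\alpha$ and $Q\ge n^\varepsilon$ give $2A/Q=o(1)$, so most periods are good.

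Your final bookkeeping also does not close, and it misplaces the role of $\alpha<\varepsilon$. The comparison from~\cite{MRST} used here saves only $(\log Q)^{-B}$. That cannot absorb losses of size $n^{O(\alpha)}$ from $O(a)$ frequencies and a mean of size $1/a$, whatever the relation between $\alpha$ and $\varepsilon$. Fortunately neither loss is real. The weight factors as $f(n/q)g(\theta n/q)$ with $\|f^{(r)}\|_\infty,\|g^{(r)}\|_\infty\ll\delta^{-r}$ and $\delta=L/Q=(\log n)^{-2}$. After truncation only $(\log n)^{O(1)}$ modes with phases $(k_1+k_2\theta)n/q+k_3M/q^2$ remain. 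Since~\cite{MRST} accepts real coefficients up to $\exp((\log Q)^{5/4})$, the size of $a$ never enters. The mean is $\gg\delta$ uniformly in $a$, because over $A$ consecutive unit intervals $\theta u$ runs through all shifts $k/A$ (identity~\eqref{eq:period-average}).

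This also forces you to fix your scale. Taking $H$ below $L$ is impossible, since a prime below $\ell_i/2$ divides at least two terms of $B_i$. On the other hand $Q/L$ must be only polylogarithmic to keep the derivative bounds polylogarithmic; the paper takes $Q=L(\log n)^2$.

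Finally, the paper does not simply select the longer block. It selects the upper block whenever $\ell_1\ge\ell_2/100$. For instance, if $\theta=1/2$, a prime dividing $\theta n-i$ also divides $n-2i$, which lies in $B_1$ when $0\le 2i<\ell_1$. So for $\ell_1\approx\ell_2$, part of the lower block is shadowed by the upper one, and your claim of compatibility ``on a positive proportion of each period'' needs this case split to be proved.
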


We prove the lemma by finding a prime that divides exactly one term of the two blocks, to the first power, and does not divide $c!$. Such a prime contradicts the square condition in~\eqref{eq:two-block}. The following estimate will supply the prime once we have constructed a smooth weight with a positive integral.

\begin{lemma}\label{lem:prime-comparison}
Fix $\varepsilon>0$. Let $n$ be large, $n^\varepsilon\le L\le n^{2/3}$, $Q=L(\log n)^2$, $0<\theta<1$ and $M\in\{n,\theta n\}$. Let $W:\mathbb R^3\to[0,1]$ be smooth and $\Z^3$-periodic, with $\|\partial^\beta W\|_\infty\ll_\beta(\log n)^{C_W|\beta|}$ for some fixed $C_W>0$ and every multi-index $\beta$. Then, for every fixed $\Lambda>0$,
\[
 \sum_{Q<q\le2Q}W\Bigl(\frac nq,\frac{\theta n}q,\frac M{q^2}\Bigr)
 =\int_Q^{2Q}W\Bigl(\frac nv,\frac{\theta n}v,\frac M{v^2}\Bigr)\frac{dv}{\log v}
 +O\Bigl(\frac Q{(\log n)^\Lambda}\Bigr),
\]
where the implied constant depends on $\varepsilon$, $\Lambda$, $C_W$ and the implied constants in the derivative bounds.
\end{lemma}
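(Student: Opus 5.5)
Expand $W$ in a Fourier series on $\mathbb R^3/\Z^3$, $W(x)=\sum_{k\in\Z^3}\widehat W(k)e(k\cdot x)$. The derivative bounds give, after integrating by parts $A$ times in each variable,
\[
|\widehat W(k)|\ll_A (\log n)^{3AC_W}(1+|k|)^{-A}.
\]
Hence the frequencies with $|k|>K:=(\log n)^{C}$, for $C$ large in terms of $C_W$ and $\Lambda$, contribute $O(Q(\log n)^{-\Lambda})$ to both sides. The frequency $k=0$ contributes $\widehat W(0)\pi$-type terms: $\widehat W(0)(\pi(2Q)-\pi(Q))$ on the left and $\widehat W(0)\int_Q^{2Q}dv/\log v$ on the right, and these agree up to $Q(\log Q)^{-\Lambda'}$ by the prime number theorem with de la Vallée Poussin error. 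The proof is therefore reduced to showing, for each of the $(\log n)^{O(1)}$ nonzero $k=(k_1,k_2,k_3)$ with $|k|\le K$,
\[
\sum_{Q<q\le2Q}e\bigl(\phi_k(q)\bigr)=\int_Q^{2Q}e\bigl(\phi_k(v)\bigr)\frac{dv}{\log v}+O\bigl(Q(\log n)^{-\Lambda-3C}\bigr),\qquad \phi_k(v)=\frac{(k_1+k_2\theta)n}{v}+\frac{k_3M}{v^2}.
\]
Here $\phi_k(v)=\beta_1 n/v+\beta_2 n/v^2$ with $\beta_1,\beta_2$ real and bounded by $(\log n)^{O(1)}$, since $M\in\{n,\theta n\}$.

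For this estimate I would use the result of Matom\"aki, Radziwi{\l}{\l}, Shao, Tao and Ter\"av\"ainen~\cite{MRST} on primes in short intervals twisted by nilsequences, or more precisely its polynomial-phase case. On the range $Q<q\le2Q$, dyadically split into intervals $[x,x+H]$ of length $H=Q(\log n)^{-B}$, the phase $\phi_k$ is approximated by its Taylor polynomial of some fixed degree $d$ around $x$. The $j$-th Taylor coefficient has size about $|\beta|nQ^{-1-j}$. Since $Q\ge n^{\varepsilon}$, for $d\gg1/\varepsilon$ the remainder term $nH^{d+1}Q^{-d-2}$ is small, because $H\le Q$ and $n/Q^{d+2}\cdot Q^{d+1}\le n/Q\cdot(\log)^{-B(d+1)}$. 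This last bound does not shrink by itself, so I would instead choose $H=Q^{1-\delta}$ and $d$ with $nQ^{-2}(H/Q)^{d}\le n^{-1}$. This is possible because $Q\ge n^\varepsilon$ and $d\ge 2/(\varepsilon\delta)$. The interval length $H=Q^{1-\delta}$, with $\delta$ small, lies in the range $H\ge Q^{\theta_0}$ where~\cite{MRST} gives the asymptotic
\[
\sum_{x<q\le x+H}e(P(q))=\int_x^{x+H}e(P(v))\frac{dv}{\log v}+O\bigl(H(\log x)^{-A}\bigr)
\]
uniformly over real polynomials $P$ of degree $\le d$, with the usual major arc main term.

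It remains to check that the main term given by~\cite{MRST} is the smooth integral and not some arithmetic correction. Their theorem replaces primes by the von Mangoldt model $\Lambda^\sharp$, and the correction $\Lambda-\Lambda^\sharp$ is handled uniformly in the phase. For $\Lambda^\sharp$, built from Siegel-zero-free sieve weights of level $\exp((\log x)^{c})$, one has
\[
\sum\Lambda^\sharp(q)e(P(q))=\int e(P(v))\,dv+O\bigl(H(\log x)^{-A}\bigr).
\]
This follows by partial summation, because $P'$ is bounded by $|\beta|n/Q^2$ on the interval. That bound is small when $Q\ge n^{1/2}$; otherwise I would rely on the Type~I structure of $\Lambda^\sharp$ together with van der Corput. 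Summing over the $Q^{\delta}$ subintervals and removing prime powers and the weight $\log$ by partial summation gives the claim.

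The main obstacle is the last step. When $Q<\sqrt n$ the phase oscillates rapidly, and one must confirm that~\cite{MRST} supplies equidistribution of primes against high-degree polynomial phases with arbitrary real coefficients on short intervals of length $Q^{1-\delta}$, with main term exactly the integral. If the major arc main term involves rational approximations, I would show that the integral of $e(\phi_k)$ and the major arc term agree up to negligible error by the same Type~I/van der Corput estimate.
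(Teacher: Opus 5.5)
Your Fourier truncation and your reduction to single modes with phase $\phi_k(v)=\gamma_1/v+\gamma_2/v^2$, where $\gamma_1=(k_1+k_2\theta)n$ and $\gamma_2=k_3M$, agree with the paper. The gap is in the per-mode estimate. The black box you quote is false as displayed: uniformly over real polynomials $P$ of degree $\le d$, the sum $\sum_{x<q\le x+H}e(P(q))$ is not $\int_x^{x+H}e(P(v))\,dv/\log v+O(H(\log x)^{-A})$. The polynomial $P(v)=v/2$ is a counterexample: the left side is $-(\pi(x+H)-\pi(x))$, while the right side is $O(1)$. The same example refutes your displayed identity for $\Lambda^\sharp$, which vanishes on even integers. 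Short-interval higher uniformity only gives $\sum_{x<q\le x+H}(\Lambda-\Lambda^\sharp)(q)e(P(q))\ll H(\log x)^{-A}$ uniformly in $P$. So the whole content of the lemma moves into evaluating $\sum\Lambda^\sharp(q)e(\phi_k(q))$ for this particular phase, and that is exactly the step you leave as ``the main obstacle''.

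That step is not routine. Partial summation (or Kusmin--Landau after a fundamental-lemma expansion) handles it only when $|\phi_k'|$ is small, which essentially means $Q\ge n^{1/2+o(1)}$. Already at $Q\asymp n^{1/2}$, $|\phi_k'|$ can be of size $(\log n)^{O(1)}$, so your claim for $Q\ge n^{1/2}$ is too optimistic. In the range $n^\varepsilon\le Q\lesssim n^{1/2}$, $|\phi_k'|$ can be as large as $n^{1-2\varepsilon+o(1)}$. There you would need, for every modulus $d\le Q^{c}$ in the sieve expansion, a comparison of $\sum_m e(\phi_k(dm))$ over an interval of length $H/d$ with $d^{-1}\int e(\phi_k)$, with a power saving that survives summation over $d$. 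That requires three things:
\begin{enumerate}
\item van der Corput's $r$-th derivative test with $r\asymp1/\varepsilon$;
\item control of the stationary-phase terms that arise where $d\,\phi_k'$ passes near a nonzero integer, since these terms are absent from the integral;
\item a splitting around the zeros in $[Q,2Q]$ of $\phi_k^{(i)}(v)=(-1)^i i!\,v^{-i-2}(\gamma_1v+(i+1)\gamma_2)$, which can occur because $\theta$ is arbitrary.
\end{enumerate}
Naming ``Type~I structure together with van der Corput'' does not supply any of this.

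The paper avoids all of it. For each mode it applies Proposition~1.12(i) of~\cite{MRST} with $j=2$. That result compares $\sum_{Q<q\le2Q}e(\gamma_1/q+\gamma_2/q^2)$ directly with $\int_Q^{2Q}e(\gamma_1/v+\gamma_2/v^2)\,dv/\log v$, with error $O_B(Q(\log Q)^{-B})$, for all real $|\gamma_1|,|\gamma_2|\le\exp((\log Q)^{5/4})$. The only check needed is that $|\gamma_i|\ll n(\log n)^{O(1)}$ lies in this range. This follows from $\log n=o((\log Q)^{5/4})$, which holds since $Q\ge n^\varepsilon$.
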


\begin{proof}
Expand $W$ in a Fourier series. The derivative bounds allow us to truncate at frequencies bounded by a power of $\log n$, with uniform error $O((\log n)^{-B})$ for any prescribed $B>0$. The resulting error in either the prime sum or the integral is $O(Q(\log n)^{-B})$.

Each remaining Fourier mode contributes the phase
\[
 \frac{(k_1+k_2\theta)n}{q}+\frac{k_3M}{q^2}
\]
with integers $k_1,k_2,k_3$ of polylogarithmic size. Proposition~1.12(i) of~\cite{MRST}, with exponent $j=2$, compares the prime sum for each such mode with its integral, with error $O_B(Q(\log Q)^{-B})$ for every fixed $B>0$, provided both real phase coefficients are $O(\exp((\log Q)^{5/4}))$. Our coefficients are $O(n(\log n)^{O(1)})$, and $\log n=o((\log Q)^{5/4})$ since $Q\ge n^\varepsilon$. As there are only $(\log n)^{O(1)}$ modes, with total coefficient size $(\log n)^{O(1)}$, the lemma follows.
\end{proof}

\begin{proof}[Proof of Lemma~\ref{lem:short}]
Suppose that $L=\max(\ell_1,\ell_2)\ge n^\varepsilon$, and write $n=ap$ with $p=P(n)$, so that $a<n^\alpha$. By Lemmas~\ref{lem:smallest} and~\ref{lem:coarse}, $p>\max(\ell_1,\ell_2,c,\sqrt n)$ for large $n$. As in the proof of Lemma~\ref{lem:transfer}, $p$ divides exactly one term $m-j$ of $B_2$, and
\begin{equation}\label{eq:rational}
 m-j=\theta n,\qquad\theta=\frac ta\in(0,1),\qquad 1\le t<a.
\end{equation}
Thus $B_2$ consists of the integers $\theta n+e$ with $j-\ell_2<e\le j$, and in particular $|e|<L$. Write $\theta=A_0/A$ in lowest terms, so $2\le A\le a<n^\alpha$. Put
\[
 Q=L(\log n)^2,\qquad\delta=\frac LQ=(\log n)^{-2},\qquad J=\Bigl[\frac n{2Q},\frac nQ\Bigr].
\]
By Lemma~\ref{lem:smallest}, $c\ll L\log n$, so $Q>\max(\ell_1,\ell_2,c)$ for large $n$. Since $A<n^\alpha$, $n^\varepsilon\le L\le n^{2/3}$ and $\alpha<\varepsilon$,
\begin{equation}\label{eq:scales}
 \frac AQ=o(1),\qquad\frac{AQ}n\ll n^{-1/3+\alpha}(\log n)^2=o(1).
\end{equation}
The second estimate ensures that $J$ contains many periods of length $A$. The first will let us exclude square divisibility while retaining whole periods.

Every prime $q\in(Q,2Q]$ divides at most one term of each block and does not divide $c!$. We first construct a weight detecting primes that divide one block and avoid the other. We then modify it to exclude divisibility by $q^2$.

\step{Separating the blocks.}
We seek a smooth $\Z^2$-periodic function $H:\mathbb R^2\to[0,1]$ such that $H(n/q,\theta n/q)>0$ implies that $q$ divides a term of exactly one block, and such that
\begin{equation}\label{eq:period-mass}
 \int_I H(u,\theta u)\,du\gg A\delta
\end{equation}
for every interval $I=[u_0,u_0+A]$ with $u_0\in\Z$. We also choose $M\in\{n,\theta n\}$ so that every term of the selected block lies within $L$ of $M$.

For smooth $1$-periodic functions $f,g$, we have the identity
\begin{equation}\label{eq:period-average}
 \int_I f(u)g(\theta u)\,du
 =\int_0^1f(x)\sum_{k=0}^{A-1}g\Bigl(\frac kA+\theta x\Bigr)dx.
\end{equation}
Indeed, split $I$ into its $A$ unit intervals. On the interval beginning at $u_0+r$, substitute $u=u_0+r+x$; the residues $\theta(u_0+r)$ run through $k/A$ modulo $1$, since $A_0$ and $A$ are coprime. We will also use $\int_I f(\theta u)\,du=A\int_0^1f$, which follows directly from $\theta A=A_0\in\Z$.

\smallskip\noindent\emph{Case 1: $\ell_1\ge\ell_2/100$.}
Then $\ell_1\ge L/100$. We select the upper block and take $M=n$. Choose smooth $1$-periodic functions $f,g:\mathbb R\to[0,1]$ such that $f$ is supported in $(\ell_1/(16Q),3\ell_1/(8Q))$ modulo $1$ and equals $1$ on $[\ell_1/(8Q),\ell_1/(4Q)]$, and $g$ is supported in $(\delta,1-\delta)$ modulo $1$ and equals $1$ on $[2\delta,1-2\delta]$. Set $H(x,y)=f(x)g(y)$.

If $H(n/q,\theta n/q)>0$, then $0<q\{n/q\}<3\ell_1/4$. Since $q\{n/q\}$ is an integer, $q$ divides a term of $B_1$. Also $\{\theta n/q\}\in(\delta,1-\delta)$, so $\theta n$ is at distance more than $\delta q>L$ from every multiple of $q$. Thus $q$ divides no term of $B_2$.

At least $\floor{A/2}\ge A/3$ of the points $k/A$, $0\le k<A$, lie in $[1/4,3/4]$. For each of them and every $x$ in the support of $f$ in $[0,1)$, we have $0\le\theta x\le3\delta/8$, hence $g(k/A+\theta x)=1$. By~\eqref{eq:period-average},
\[
 \int_I H(u,\theta u)\,du
 \ge\frac A3\int_0^1f\ge\frac{A\ell_1}{24Q}\ge\frac{A\delta}{2400},
\]
which proves~\eqref{eq:period-mass} in this case.

\smallskip\noindent\emph{Case 2: $\ell_2>100\ell_1$.}
Now $L=\ell_2$. We select the lower block and take $M=\theta n$. Let $f$ be a smooth $1$-periodic function with values in $[0,1]$, supported strictly inside
\[
 \Bigl(-\frac j{2Q},\frac{\ell_2-j}{2Q}\Bigr)\pmod1
\]
and equal to $1$ on its middle subinterval of length $\delta/8$. The whole interval has length $\delta/2$, and $\int_0^1f\ge\delta/8$. Let $g$ be a smooth $1$-periodic function with values in $[0,1]$ which vanishes on $[0,\ell_1/Q]$ modulo $1$ and equals $1$ outside
\[
 E=\Bigl[0,\frac{\ell_1}Q+\frac\delta{1000}\Bigr]\cup\Bigl[1-\frac\delta{1000},1\Bigr],
\]
so that $|E|<0.012\,\delta$. Set $H(x,y)=g(x)f(y)$.

If $H(n/q,\theta n/q)>0$, let $\xi$ be the representative of $\theta n/q$ modulo $1$ in the support of $f$. Since $\theta n=tp$ is an integer, so is $q\xi$, and $-j<q\xi<\ell_2-j$ because $q\le2Q$. Hence $\theta n-q\xi$ is a term of $B_2$ divisible by $q$. Since $g(n/q)\ne0$, we have $\{n/q\}>\ell_1/Q>\ell_1/q$, so $q$ divides no term of $B_1$.

On every interval $I$ of length $A$ with integer endpoints,
\[
 \int_I H(u,\theta u)\,du
 \ge\int_I f(\theta u)\,du-\int_I(1-g(u))\,du
 \ge\frac{A\delta}{8}-A|E|\ge\frac{A\delta}{10}.
\]
This proves~\eqref{eq:period-mass} in the second case.

In both cases we construct $f$ and $g$ by translating and rescaling fixed smooth functions. In Case~1, their transition widths can be chosen at least a fixed multiple of $\delta$, since $\ell_1/Q\ge\delta/100$. In Case~2, the transition widths of $f$ are comparable to $\delta$, and those of $g$ are $\delta/1000$, independently of $\ell_1/Q$. Each differentiation costs the inverse of a transition width, so
\[
 \|f^{(r)}\|_\infty+\|g^{(r)}\|_\infty\ll_r\delta^{-r},
\]
uniformly in the parameters; changing $j$ in Case~2 only translates $f$.

\step{Excluding square divisibility.}
The selected prime might still divide its term more than once. We now exclude this possibility while preserving~\eqref{eq:period-mass} on a fixed proportion of the complete periods in $J$.

Every term of the selected block lies within $L$ of $M$. Put $U=M/Q^2$. If $U<1/4$, then $M+L<Q^2$ for large $n$, so no term of the block is divisible by $q^2$ for $q>Q$, and we take $G=1$. Otherwise, fix a small absolute constant $\tau_0>0$ and a smooth $1$-periodic $G:\mathbb R\to[0,1]$ supported where $\tau_0<\{\zeta\}<1-\tau_0$ and equal to $1$ where $2\tau_0\le\{\zeta\}\le1-2\tau_0$. If $G(M/q^2)>0$, then $M$ is at distance more than $\tau_0q^2>L$ from every multiple of $q^2$, so again no term of the block is divisible by $q^2$.

Under the substitution $u=n/v$, the interval $[Q,2Q]$ becomes $J$, and $M/v^2$ becomes $\phi(u)=Mu^2/n^2$. Partition $J$ into intervals $I=[u_0,u_0+A]$ with $u_0\in\Z$, discarding two end pieces of total length at most $2A$. Since $0<\phi'(u)\le2/Q$ on $J$, the variation of $\phi$ on each period is at most $2A/Q<\tau_0$ for large $n$, by~\eqref{eq:scales}.

Suppose $U\ge1/4$. Then $\phi(J)=[U/4,U]$. The points in this interval within $3\tau_0$ of an integer have total length $O(\tau_0(U+1))$, hence relative measure $O(\tau_0)$, uniformly for $U\ge1/4$. Since $du/dz$ varies by at most a factor $2$ for $z=\phi(u)$ in this range, their preimage in $J$ also has relative measure $O(\tau_0)$. Any complete period containing a point outside this preimage has $G(\phi(u))=1$ throughout: the distance from an integer starts above $3\tau_0$ and changes by less than $\tau_0$. Call these periods \emph{good}. Their total length is at least
\[
 \Bigl(1-C\tau_0-\frac{4AQ}n\Bigr)|J|\ge\frac12|J|
\]
once $\tau_0$ is small and $n$ is large. When $G=1$, every complete period is good.

On each good period, multiplication by $G(\phi(u))$ leaves the integral in~\eqref{eq:period-mass} unchanged. Summing over these periods gives
\[
 \int_J H(u,\theta u)G(\phi(u))\,du\gg\delta|J|\asymp\frac{\delta n}{Q}.
\]

\step{Applying the prime comparison.}
Set $W(x,y,\zeta)=H(x,y)G(\zeta)$. By construction, positivity of $W(n/q,\theta n/q,M/q^2)$ would make $q$ divide exactly one term of the two blocks, to the first power, and nothing in $c!$. The square condition therefore forces
\[
 S(W)=\sum_{Q<q\le2Q}W\Bigl(\frac nq,\frac{\theta n}q,\frac M{q^2}\Bigr)=0.
\]
The derivative bounds for $f,g$, together with the fixed derivative bounds for $G$, give
\[
 \|\partial^\beta W\|_\infty\ll_\beta\delta^{-|\beta|}=(\log n)^{2|\beta|},
\]
so Lemma~\ref{lem:prime-comparison} applies with $C_W=2$ and $\Lambda=5$.

On the other hand, $|dv|/\log v\asymp Q^2(n\log Q)^{-1}\,du$ under $u=n/v$, so the corresponding integral satisfies
\[
 I(W)\gg\frac{Q^2}{n\log Q}\cdot\frac{\delta n}{Q}
 \gg_\varepsilon\frac Q{(\log n)^3}.
\]
Thus Lemma~\ref{lem:prime-comparison} gives $S(W)=I(W)+O(Q(\log n)^{-5})>0$ for large $n$, a contradiction.
\end{proof}

\section{Short blocks}\label{sec:short-blocks}

\begin{lemma}\label{lem:count}
Let $0<\varepsilon\le10^{-3}$. The number of integers $n\le X$ that have a prime factor $p>X^{1-\varepsilon}$ and satisfy
\[
 n!\,(n-1)!\,m!\,(m-\ell_2)!\,c!=\sq,\qquad n-1>m>m-\ell_2>c\ge0,\qquad 2\le\ell_2\le X^\varepsilon,
\]
for some $m,\ell_2,c$, is $O(X^{1/2+5\varepsilon}(\log X)^2)$.
\end{lemma}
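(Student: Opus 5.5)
The plan is to reduce the count to a sieve problem in the large prime $p$, with all other parameters fixed, and to handle each fixed choice by the large sieve~\eqref{eq:sieve}, using the Weil bound~\eqref{eq:weil} for the residue restrictions. This follows the pattern of Case~2 in the proof of Theorem~\ref{thm:nonconsecutive}, but with $n$ replaced by the prime cofactor.

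\textbf{Step 1: reduce to a polynomial in $p$.} Write $n=ap$ with $p>X^{1-\varepsilon}$ prime, so that $a<X^\varepsilon$. Removing squares, the condition becomes
\[
 n\,(m)_{\ell_2}\,c!=\sq.
\]
By Lemma~\ref{lem:smallest}, $c\ll X^\varepsilon\log X$, so $p>\max(\ell_2,c,\sqrt n)$. As in the proof of Lemma~\ref{lem:transfer}, $p$ divides exactly one term of the lower block, namely $m-j=tp$ with $0\le j<\ell_2$ and $1\le t<a$. The terms of the lower block are therefore $tp+(j-i)$ for $0\le i<\ell_2$, and the square condition says that
\[
 f(z)=c!\,a\,z\prod_{i=0}^{\ell_2-1}\bigl(tz+j-i\bigr)
\]
is a perfect square at $z=p$.

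\textbf{Step 2: count the parameters.} We fix the tuple $(a,t,j,\ell_2,c)$. There are at most
\[
 X^\varepsilon\cdot X^\varepsilon\cdot X^\varepsilon\cdot X^\varepsilon\cdot O(X^\varepsilon\log X)=O(X^{5\varepsilon}\log X)
\]
such tuples. For a fixed tuple, let $E$ be the set of admissible $p\le X/a$.

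\textbf{Step 3: residue restrictions.} Take $\mathcal P$ to be the primes $q$ with $X^{1/4}<q\le\sqrt{X/a}$. Such $q$ exceed $a\ell_2$ and $c$, so $q\nmid c!\,a\,t$. The $\ell_2+1$ roots $0$ and $(i-j)/t$ are distinct modulo $q$, because $|i-j|<\ell_2<q$ and $i-j\neq0$ for distinct $i$. For each $q\in\mathcal P$, every $p\in E$ makes $f(p)$ a square or zero modulo $q$. Exactly as in Case~2 of Theorem~\ref{thm:nonconsecutive}, the Weil bound~\eqref{eq:weil} gives
\[
 \rho(q)\le\frac q2+O\bigl(\ell_2\sqrt q\bigr)=\frac q2\bigl(1+O(X^{\varepsilon-1/8})\bigr).
\]
Hence $(q-\rho(q))/\rho(q)\ge1-o(1)$ and $1\le\rho(q)<q$.

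\textbf{Step 4: apply the large sieve.} Since the primes of $\mathcal P$ are at most $\sqrt{X/a}$ and $E\subseteq[1,X/a]$, the large sieve~\eqref{eq:sieve} with length $X/a$ applies. The sum over $\mathcal P$ of $(q-\rho(q))/\rho(q)$ is $\gg\pi(\sqrt{X/a})$, which gives
\[
 |E|\ll\frac{X/a}{\pi(\sqrt{X/a})}\ll\sqrt{X/a}\,\log X\le\sqrt X\log X.
\]
Each $p$ determines $n=ap$. Summing over the $O(X^{5\varepsilon}\log X)$ tuples gives the bound $O(X^{1/2+5\varepsilon}(\log X)^2)$.

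\textbf{Main obstacle.} The delicate point is the applicability of Weil. One must check that $f$ has distinct roots and nonzero leading coefficient modulo every $q\in\mathcal P$; this is where the hypotheses $q>a\ell_2$, $q>c$ and $t<a<q$ enter. The hypothesis $\ell_2\ge2$ is not needed for the sieve itself, since degree at least $2$ already suffices. It only serves to separate these cases from the case $\ell_2=1$ treated elsewhere. One must also confirm that the prime $p$ does not divide $c!$, which is guaranteed by $p>c$.
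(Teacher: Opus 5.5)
Your overall route is the paper's: fix $(a,t,j,\ell_2,c)$, impose quadratic-residue conditions at primes $q$ of size about $\sqrt X$ using the Weil bound, apply the large sieve, and sum over $O(X^{5\varepsilon}\log X)$ tuples. Sieving the prime $p$ instead of $m=tp+j$ is only an affine change of variable.

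Step~3, however, contains an error. For $i=j$ the factor $tz+j-i$ equals $tz$, so
\[
 f(z)=a\,t\,c!\,z^2\prod_{\substack{0\le i<\ell_2\\ i\ne j}}(tz+j-i).
\]
Thus $0$ is a double root, $f$ has only $\ell_2$ distinct roots rather than $\ell_2+1$, and \eqref{eq:weil} does not apply as you invoke it. The fix is the step the paper takes explicitly: remove the square $p^2$. Sieve with $g(z)=a\,t\,c!\prod_{i\ne j}(tz+j-i)$, or equivalently note that $\chi_q(f(z))=\chi_q(g(z))$ for $z\not\equiv0$. For $q$ in your range and $\ell_2\ge2$, $g$ has nonzero leading coefficient and $\ell_2-1\ge1$ distinct nonzero roots modulo $q$. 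Weil then gives $\rho(q)\le q/2+O(\ell_2\sqrt q)$, and the rest of your argument goes through unchanged.

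This also shows that your closing remark is wrong. The hypothesis $\ell_2\ge2$ is needed exactly here, because it makes $g$ nonconstant. For $\ell_2=1$ one has $j=0$ and $f(z)=a\,t\,c!\,z^2$. This is a square for every $z$ as soon as $atc!$ is a square, so the residue conditions carry no information.

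This case really occurs. For every prime $p$, taking $n=2p$, $m=p$, $c=2$ gives $(2p)!\,(2p-1)!\,p!\,(p-1)!\,2!=\bigl(2p\,(2p-1)!\,(p-1)!\bigr)^2$. So without $\ell_2\ge2$ the count would be $\gg X/\log X$; compare the Straus identities behind Proposition~\ref{prop:five}. An argument that claims not to use $\ell_2\ge2$ must therefore be flawed somewhere, and the overlooked double root at $0$ is where yours breaks.
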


\begin{proof}
Write $n=ap$, so that $a<X^\varepsilon$. By Lemma~\ref{lem:smallest}, $c\ll\ell_2\log X\le X^{\varepsilon+o(1)}$, so $p>\max(\ell_2,c,\sqrt n)$ for large $X$. As in the proof of Lemma~\ref{lem:transfer}, $m-j=tp$ for a unique $0\le j<\ell_2$ and some $1\le t<a$.

Fix $(a,t,\ell_2,j,c)$ and put
\begin{equation}\label{eq:remaining}
 f(z)=at\,c!\prod_{\substack{0\le i<\ell_2\\ i\ne j}}(z-i).
\end{equation}
Removing $p^2$ from~\eqref{eq:two-block} (with $\ell_1=1$) shows that $f(m)$ is a square. Each possible $m\le X$ determines $n=a(m-j)/t$.

For every prime $\sqrt X/2<q\le\sqrt X$, we have $q>\max(a,t,c,\ell_2)$ for large $X$. Thus $f$ has nonzero leading coefficient and $\ell_2-1$ distinct roots modulo $q$. By~\eqref{eq:weil}, the number $\rho(q)$ of residues for which $f$ is a square or zero satisfies
\[
 \rho(q)\le\frac q2+\frac{\ell_2-1+(\ell_2-2)\sqrt q}{2}\le\frac{3q}{4},
\]
uniformly in the parameters, since $\ell_2/\sqrt q\ll X^{\varepsilon-1/4}=o(1)$. Apply~\eqref{eq:sieve} to the possible values of $m$, with $\mathcal P$ consisting of the primes in this interval. As $(q-\rho(q))/\rho(q)\ge1/3$, the number of possible $m$ is
\[
 \ll X\Biggl(\sum_{\sqrt X/2<q\le\sqrt X}\frac{q-\rho(q)}{\rho(q)}\Biggr)^{-1}
 \ll\sqrt X\log X.
\]
There are $O(X^{5\varepsilon}\log X)$ parameter tuples, since $a,t,\ell_2,j\le X^\varepsilon$ and $c\ll X^\varepsilon\log X$. Summing gives $O(X^{1/2+5\varepsilon}(\log X)^2)$ values of $n$, which proves the stated bound.
\end{proof}

We now combine the two-block estimates of Sections~\ref{sec:two-blocks}--\ref{sec:short-blocks} with the three-factor bound~\eqref{eq:all-three}.

\begin{corollary}\label{cor:unit}
Let $0<\varepsilon\le10^{-3}$. Apart from at most $X^{1/2+6\varepsilon+o(1)}$ exceptions, every squarefree $n\in[X/2,X]$ with
\[
 P(n)>X^{1-\varepsilon/2}\qquad\text{and}\qquad P\bigl(s(n-1)\bigr)>X^{3\varepsilon}
\]
has no representation with fewer than five factorials, and each of its five-factor representations has the form
\[
 n!\,(n-1)!\,m!\,(m-1)!\,c!=\sq,\qquad n-1>m>m-1>c\ge1.
\]
\end{corollary}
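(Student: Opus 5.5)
We combine the three-factor count~\eqref{eq:all-three} with Lemmas~\ref{lem:short}, \ref{lem:transfer} and~\ref{lem:count}. Fix a squarefree $n\in[X/2,X]$ with the two stated properties and write $n=ap$ with $p=P(n)>X^{1-\varepsilon/2}$. Then $a<2X^{\varepsilon/2}$ and $p>\sqrt n$. By the Erd\H{o}s--Selfridge argument in Section~\ref{subsec:main-term}, $n$ has no two-factor representation unless $n$ is a square, and a squarefree $n\ge2$ is not. By~\eqref{eq:all-three}, at most $O(\sqrt X)$ integers $n\le X$ have a three-factor representation, and we discard these. Any remaining representation with four or five factorials has the form~\eqref{eq:two-block}, with $c=0$ in the four-factor case.

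\textbf{Step 1: the upper block has length one.} We apply Lemma~\ref{lem:short} with $\alpha=\varepsilon/2$ and $\varepsilon$ as given. Since $n\ge X/2$, we have $P(n)>X^{1-\varepsilon/2}\ge n^{1-\varepsilon/2}$, so for large $X$ every representation has $\ell_1,\ell_2<n^\varepsilon\le X^\varepsilon$. Lemma~\ref{lem:smallest} then gives $c\ll X^{\varepsilon}\log X$, so $p$ exceeds $\ell_1,\ell_2$ and $c$. Suppose $\ell_1\ge2$. Lemma~\ref{lem:transfer} gives
\[
 P\bigl(s(n-1)\bigr)\le\max(\ell_1,c,a\ell_2)\ll X^{3\varepsilon/2}\log X<X^{3\varepsilon}
\]
for large $X$, which contradicts the hypothesis. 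Hence $\ell_1=1$.

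\textbf{Step 2: the lower block has length one, up to exceptions.} With $\ell_1=1$, a representation with $\ell_2\ge2$ has exactly the shape considered in Lemma~\ref{lem:count}, with $2\le\ell_2\le X^\varepsilon$. The prime factor $p>X^{1-\varepsilon/2}>X^{1-\varepsilon}$ satisfies that lemma's hypothesis, and $c=0$ is allowed. So all such $n$ lie in a set of size $O(X^{1/2+5\varepsilon}(\log X)^2)$, which we add to the exceptions. The total number of exceptions is then at most $X^{1/2+6\varepsilon+o(1)}$, with room to spare.

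\textbf{Step 3: ruling out four factors.} For every non-exceptional $n$, every four- or five-factor representation has $\ell_1=\ell_2=1$, so it reads $n!(n-1)!m!(m-1)!c!=\sq$. As in Lemma~\ref{lem:transfer}, $p$ divides $m=tp$ with $1\le t<a$. The square condition is $n\,m\,c!=\sq$, that is, $p^2\,at\,c!=\sq$, so $at\,c!$ is a square. In the four-factor case $c=0$, and then $at=\sq$. Since $n$ is squarefree, $a$ is squarefree, so $a\mid t$. This contradicts $1\le t<a$. Hence there is no four-factor representation, and in every five-factor representation $c\ge1$, which is the claimed form.

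\textbf{Main obstacle.} The substantive input is Lemma~\ref{lem:short}. What needs care here is checking its parameters against the corollary's: the exponent of $n$ versus $X$, and $\alpha=\varepsilon/2<\varepsilon$. One must also check that $a\ell_2$ in Lemma~\ref{lem:transfer} stays below $X^{3\varepsilon}$.
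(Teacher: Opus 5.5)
Your proposal is correct and follows the paper's proof essentially step for step: Lemma~\ref{lem:short} with $\alpha=\varepsilon/2$, then Lemmas~\ref{lem:smallest} and~\ref{lem:transfer} to force $\ell_1=1$, Lemma~\ref{lem:count} to bound the $n$ with $\ell_2\ge2$, squarefreeness to exclude four factors, and~\eqref{eq:all-three} together with Erd\H{o}s--Selfridge for two and three factors (the paper phrases the four-factor step as ``$nm=\sq$ with $m<n$ is impossible for squarefree $n$'', which is equivalent to your $a\mid t$ argument). One cosmetic slip: $X^{1-\varepsilon/2}\ge n^{1-\varepsilon/2}$ follows from $n\le X$, whereas $n\ge X/2$ is only what makes $n$ large enough for Lemma~\ref{lem:short}.
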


\begin{proof}
Write $n=ap$ with $p=P(n)$, so $a<X^{\varepsilon/2}$. Consider a four- or five-factor representation, in the notation of~\eqref{eq:two-block}. Since $p>n^{1-\varepsilon/2}$, Lemma~\ref{lem:short} (with $\alpha=\varepsilon/2$) gives $\ell_1,\ell_2<X^\varepsilon$, and then Lemma~\ref{lem:smallest} gives $c\le X^{\varepsilon+o(1)}$. Thus $\max(\ell_1,c,a\ell_2)<X^{3\varepsilon}$, and Lemma~\ref{lem:transfer} forces $\ell_1=1$. By Lemma~\ref{lem:count}, $\ell_2\ge2$ occurs for at most $X^{1/2+6\varepsilon+o(1)}$ values of $n$. Otherwise $\ell_1=\ell_2=1$; a four-factor representation would then give $nm=\sq$ with $m<n$, which is impossible for squarefree $n$.

A two-factor representation would make $n$ a square (Section~\ref{subsec:main-term}), and by~\eqref{eq:all-three} only $O(\sqrt X)$ integers $n\le X$ have a three-factor representation.
\end{proof}

\section{Proof of Theorem~\ref{thm:main}}\label{sec:proof-main}

\subsection{The remaining five-factor representations}
Let $n=ap$ be as in Corollary~\ref{cor:unit}, and consider a five-factor representation $n!(n-1)!m!(m-1)!c!=\sq$. Its square condition is $nm\,c!=\sq$. By Lemma~\ref{lem:smallest}, $c\ll\log X<p$, so $p$ divides $m$, say $m=tp$ with $1\le t<a$. Removing $p^2$ we obtain $at\,c!=\sq$, that is, $t=s(ac!)v^2$ for some integer $v\ge1$. Such a $t<a$ exists if and only if $s(ac!)<a$. We are therefore led to the set
\[
 \mathcal R=\{a\ge1\ \text{squarefree}:\ s(ac!)\ge a\ \text{for all}\ c\ge0\}.
\]
If $a\in\mathcal R$, then $n$ has no five-factor representation. The condition is automatic for $c\le1$.

\begin{lemma}\label{lem:R}
The set $\mathcal R$ has positive lower density.
\end{lemma}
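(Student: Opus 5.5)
The plan is to find an explicit positive-density subfamily of squarefree $a$ for which $s(ac!)\ge a$ holds for every $c$. For squarefree $a$ we have $s(ac!)=a\,s(c!)/\gcd(a,s(c!))^2$. So $s(ac!)\ge a$ is equivalent to
\[
 \gcd(a,s(c!))^2\le s(c!).
\]
The natural sufficient condition is that $a$ share few primes with $s(c!)$. I would impose $P(a)\le$ something small relative to the primes in $s(c!)$, or better, require that $a$ have no prime factor in a range where $s(c!)$ is small.

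\textbf{Step 1: reduce to bounded $c$.} For large $c$, every prime in $(c/2,c]$ divides $s(c!)$, so $s(c!)\ge e^{\delta c}$ (as in \eqref{eq:S-count}). The gcd divides $\prod_{p\le c}p$ restricted to primes of $a$. I would restrict to $a$ all of whose prime factors exceed some fixed $z$, say $a$ coprime to $\prod_{p\le z}p$. Then $\gcd(a,s(c!))=1$ for $c<z$, and the condition holds trivially there.

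\textbf{Step 2: handle $c\ge z$.} Here I need $\gcd(a,s(c!))^2\le s(c!)$. Write $g=\gcd(a,s(c!))$, which is a product of distinct primes $q$ with $z<q\le c$ and $q\mid a$. Let $s_c=s(c!)$ and $R_c=\prod_{q\mid s_c,\,q\le c}q=s_c$. It suffices that the primes of $a$ in $(z,c]$ that divide $s_c$ have product at most $\sqrt{s_c}$. Since $s_c\ge\prod_{c/2<q\le c}q=e^{(1/2+o(1))c}$, it is enough to bound $\prod_{q\mid a,\,q\le c}q\le e^{c/5}$ for all $c\ge z$.

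I would therefore take the family of squarefree $a$ such that, for every $y\ge z$, $\sum_{q\mid a,\,z<q\le y}\log q\le y/5$. For $y>\log a$ this is automatic beyond $y\approx 5\log a$. For the remaining $y$, it suffices to show that a positive proportion of $a$ satisfy it. A simple choice is to require that $a$ have no prime factors in $(z,Z]$ with $Z$ fixed large, together with $\omega$-type control above $Z$. A cleaner sufficient condition is $\sum_{q\mid a,\,q\le y}\log q\le y/5$ for all $y$. Its failure for some $y\ge Z$ implies $\prod_{q\mid a,\,Z<q\le y}q>e^{y/5}$, whose count over $a\le X$ is bounded via $\sum_{d}$ over squarefree $d$ with $P(d)\le y$, $d>e^{y/5}$, each weighted by $X/d$.

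\textbf{Step 3: density estimate (main obstacle).} The number of $a\le X$ failing at level $y$ is at most
\[
 X\sum_{\substack{d>e^{y/5}\\ P(d)\le y,\ d \text{ squarefree, } (d,\prod_{p\le Z}p)=1}}\frac1d .
\]
By Rankin's trick with exponent $1-\sigma$ (for instance $\sigma=1/2$), this is
\[
 \le X e^{-\sigma y/5}\prod_{Z<p\le y}\bigl(1+p^{-1+\sigma}\bigr)\le X\exp\bigl(-\sigma y/5+O(\sqrt y/\log y)\bigr).
\]
Summing over integer $y\ge Z$ (the condition only changes at primes) gives $X\cdot o_Z(1)$. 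Meanwhile, squarefree $a$ coprime to $\prod_{p\le Z}p$ have density $\frac6{\pi^2}\prod_{p\le Z}\frac{p}{p+1}\gg 1/\log Z$. This exceeds the exceptional density, which decays exponentially in $Z$, once $Z$ is large. Hence $\mathcal R$ contains a set of positive lower density.

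The delicate point is making the lower bound $\log s(c!)\ge(1/2-o(1))c$ hold uniformly for $c\ge Z$, which needs Chebyshev bounds with explicit constants. I would instead use the safer inequality $\log s(c!)\ge\vartheta(c)-\vartheta(c/2)\ge c/5$ for $c\ge Z$, and adjust the threshold to $y/10$.
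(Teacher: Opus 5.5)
Your argument is correct and follows the same plan as the paper. You restrict to squarefree $a$ with no prime factor up to a large parameter (density $\asymp 1/\log Z$), observe that $c\le Z$ is then automatic, and show that the exceptional set for larger $c$ has density exponentially small in $Z$ by counting multiples of large squarefree divisors. The paper's execution is leaner: it bounds the loss at each $c>Z$ directly by $X\tau(s_c)/\sqrt{s_c}$ with $\tau(s_c)\le2^{\pi(c)}$, which avoids your uniform auxiliary condition and Rankin's trick. The ``delicate point'' you flag is not a real obstacle, since the prime number theorem gives $\log s(c!)\ge\vartheta(c)-\vartheta(c/2)\ge 2c/5$ for all $c$ beyond a fixed $c_0$, and you may take $Z\ge c_0$.
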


\begin{proof}
Let $B_0$ be large. The squarefree integers with no prime factor at most $B_0$ have density
\[
 \varrho_{B_0}=\frac6{\pi^2}\prod_{p\le B_0}\frac p{p+1}\asymp\frac1{\log B_0},
\]
and each of them satisfies $s(ac!)=a\,s(c!)\ge a$ for $c\le B_0$.

We now estimate how many of these integers fail the condition for some $c>B_0$. Put $s_c=s(c!)$. For squarefree $a$ we have $s(ac!)=as_c/\gcd(a,s_c)^2$, so $s(ac!)<a$ if and only if $\gcd(a,s_c)>\sqrt{s_c}$. In that case $a$ is divisible by a divisor $d>\sqrt{s_c}$ of $s_c$, so
\[
 \#\{a\le Z:a\ \text{squarefree},\ s(ac!)<a\}\le\sum_{\substack{d\mid s_c\\ d>\sqrt{s_c}}}\frac Zd\le Z\frac{\tau(s_c)}{\sqrt{s_c}}.
\]
The primes in $(c/2,c]$ show that $s_c\ge e^{(1/2+o(1))c}$, while $\tau(s_c)\le2^{\pi(c)}=e^{o(c)}$. Summing over $c>B_0$ gives a total loss of $O(Ze^{-\gamma B_0})$ from our initial set, for some absolute $\gamma>0$. Since $e^{-\gamma B_0}=o(\varrho_{B_0})$, the lower density of $\mathcal R$ is at least $\varrho_{B_0}/2$ for large $B_0$.
\end{proof}

This is a quantitative form of an observation of Erd\H{o}s and Graham~\cite[p.~354]{EG}.

\subsection{Supply of candidates}
For a squarefree $a\le X^\alpha$, the primes $p\in(X/(2a),X/a]$ give about $X/(a\log X)$ squarefree integers $n=ap\in(X/2,X]$. Summing over $a$ in a set of positive lower density yields $\gg X$ candidates. We first check that the condition on $n-1$ in Corollary~\ref{cor:unit} removes only a small proportion of them. Recall that an integer is $y$-smooth if all its prime factors are at most $y$.

\begin{lemma}\label{lem:smooth}
For all sufficiently small fixed $\varepsilon>0$, the number of $n\in[X/2,X]$ with $P(s(n-1))\le X^{3\varepsilon}$ is at most $CXe^{-1/(6\varepsilon)}+O(X^{3/4})$, where $C$ is absolute.
\end{lemma}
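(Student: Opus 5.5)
We count $N=n-1\in[X/2-1,X]$ whose squarefree part $s(N)$ is $y$-smooth, where $y=X^{3\varepsilon}$. Write $N=s\,v^2$ with $s=s(N)$ squarefree and $y$-smooth. The count splits at a threshold on $v$, or equivalently on $s$. The idea is that $N$ is then $y$-smooth apart from its square part, and integers with large $y$-smooth part are rare by the Rankin bound. The obstacle is that $v^2$ may carry large primes, so we treat large and small $v$ differently.

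\emph{Large square part.} Suppose $v>X^{1/4}$. For each such $v$, the number of $N\le X$ divisible by $v^2$ is at most $X/v^2$. Summing over $v>X^{1/4}$ gives $O(X^{3/4})$. This is the source of the $O(X^{3/4})$ error term.

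\emph{Small square part.} Now suppose $v\le X^{1/4}$. Then $s\ge X^{1/2}/4$, and $s$ is a squarefree $y$-smooth integer with $s\le X/v^2$. Fix $v$. The number of admissible $s\le X/v^2$ is at most $\Psi(X/v^2,y)$. We apply Rankin's trick with exponent $\sigma=1-\lambda/\log y$:
\[
 \Psi(Z,y)\le Z^{\sigma}\prod_{p\le y}\bigl(1+p^{-\sigma}\bigr)\le Z\,e^{-\lambda\log Z/\log y}\exp\Bigl(\sum_{p\le y}p^{-1}e^{\lambda\log p/\log y}\Bigr).
\]
Here squarefree-ness lets us use $1+p^{-\sigma}$ in place of $(1-p^{-\sigma})^{-1}$, though either works. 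The sum over $p$ is $\log\log y+O(e^{\lambda})$, so the last factor is $\ll\log y\cdot e^{O(e^\lambda)}$. We have $\log Z/\log y\ge(1/2)/(3\varepsilon)=1/(6\varepsilon)$, up to $O(1/\log X)$. Taking $\lambda=1$, this gives the bound $Z e^{-1/(6\varepsilon)}\cdot O(\log y)$.

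\emph{Removing the $\log y$ loss.} The factor $\log y$ is fatal, since it grows with $X$. To avoid it, we should not count all $s$, and we should use that $s$ is squarefree but exploit the restriction differently. Summing over $v$ with weight $1/v^2$ still leaves $\log y$.

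The better route is to count $y$-smooth $s$ by weighting. We compare with $\sum_{s\text{ $y$-smooth}}(Z/s)^{\sigma}$ but keep the Mertens product $\prod_{p\le y}(1+p^{-1})\asymp\log y$ relative to the density of all squarefree $s$. Dividing by the trivial count is not available, because we need an absolute bound.

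So I would instead use the standard bound $\Psi(Z,Z^{1/u})\ll Z e^{-u/2}$, valid uniformly for $u\le \log Z/\log\log Z$. This follows from Rankin with $\sigma=1-\log u/\log y$, or simply from de Bruijn, $\rho(u)\le 1/\Gamma(u+1)$. Here $u\ge 1/(6\varepsilon)$, and for small $\varepsilon$ we get $\Psi\ll Z e^{-2/(6\varepsilon)}\cdot$ (a bounded factor). This is comfortably at most $CZe^{-1/(6\varepsilon)}$.

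\emph{Summation and main obstacle.} Summing $X v^{-2}\cdot Ce^{-1/(6\varepsilon)}$ over $v\le X^{1/4}$ gives $CXe^{-1/(6\varepsilon)}\zeta(2)$, which finishes the proof. The main obstacle is getting an explicit absolute constant in the smooth-number bound, uniformly in $u\in[1/(6\varepsilon),\,1/(3\varepsilon)]$ as $X\to\infty$. I would quote $\Psi(Z,y)\le C_0Z\rho(u)$-type uniform bounds, or do Rankin carefully with $\sigma=1-\log(u\log u)/\log y$.
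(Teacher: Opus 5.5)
Your treatment of the large square part ($v>X^{1/4}$, contributing $O(X^{3/4})$) is exactly the paper's first step. For $v\le X^{1/4}$ you take a different route, bounding the count for each fixed $v$ by $\Psi(X/v^2,y)$, and the decisive step does not work as written.

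\emph{The exponent error.} From $\Psi(Z,y)\ll Ze^{-u/2}$ and only $u\ge1/(6\varepsilon)$ you get $Ze^{-1/(12\varepsilon)}$, not $Ze^{-2/(6\varepsilon)}$. So the per-$v$ bound $C(X/v^2)e^{-1/(6\varepsilon)}$ that you sum at the end is not justified. A total of $Xe^{-1/(12\varepsilon)}$ would still suffice for Proposition~\ref{prop:candidates}, but it is not the lemma.

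\emph{The attribution error.} In your regime ($u\le1/(3\varepsilon)$ bounded, $y\to\infty$), the bound $\Psi(x,y)\ll xe^{-u/2}$ does not follow from Rankin with $\sigma=1-\log u/\log y$. Every $\sigma<1$ leaves the factor $\prod_{p\le y}(1+p^{-\sigma})\ge\prod_{p\le y}(1+1/p)\asymp\log y$, which you had just called fatal. The uniform bound is a genuine theorem that needs an extra device.

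\emph{Two repairs.} Either keep $u_v=\log(X/v^2)/\log y$ exactly. Then $e^{-u_v/2}=e^{-1/(6\varepsilon)}v^{1/(3\varepsilon\log X)}$, and $\sum_{v\le X^{1/4}}v^{-2+1/(3\varepsilon\log X)}\le\zeta(3/2)$ once $\log X\ge2/(3\varepsilon)$. Or use $\Psi(x,y)=x\rho(u)+O(x/\log y)$ together with $\rho(u)\le1/\Gamma(u+1)\le e^{-2u}$ for $u\ge e^3$. The $O(X/\log y)$ term is absorbed into $Xe^{-1/(6\varepsilon)}$ for $X$ large in terms of $\varepsilon$. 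This is where your $e^{-2/(6\varepsilon)}$ genuinely comes from. Either way you import nontrivial smooth-number estimates.

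\emph{The paper's route.} The paper needs none of this and never fixes $v$. With $\sigma=1/\log y$ it averages the weight $\prod_{p\mid N,\,p\le y}p^{\sigma}$ over all $N\le X$. The average is at most $\prod_{p\le y}(1+(p^\sigma-1)/p)\ll1$ with an absolute constant, because $p^\sigma\le e$ and $\sum_{p\le y}(\log p)/p\ll\log y$. A $y$-smooth squarefree $s(N)\ge X^{1/2}/3$ forces the weight to be at least $e^{1/(6\varepsilon)+o(1)}$, and Markov's inequality finishes.

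The $\log y$ never appears here. A small positive exponent is averaged against the density $1/d$ of multiples of $d$, instead of Rankin's weight $(Z/s)^\sigma$ summed over all smooth $s$ down to $1$. The same device works inside your framework: for fixed $v$, $s$ lies in the window $[(X/2-1)/v^2,\,X/v^2]$, where $s^\sigma\ge((X/2-1)/v^2)^\sigma$. This gives $\ll(X/v^2)e^{-u_v}$ and would have completed your route elementarily.
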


\begin{proof}
Write $n-1=s(n-1)u^2$. The number of $n$ with $u>X^{1/4}$ is at most $\sum_{u>X^{1/4}}X/u^2\ll X^{3/4}$. For the remaining $n$ we have $s(n-1)\ge X^{1/2}/3$ when $X$ is large.

Put $y=X^{3\varepsilon}$ and $\sigma=1/\log y$. Expanding into sums over squarefree divisors and using $\floor{X/d}\le X/d$ gives
\[
 \frac1X\sum_{v\le X}\prod_{\substack{p\mid v\\p\le y}}p^\sigma
 \le\prod_{p\le y}\Bigl(1+\frac{p^\sigma-1}{p}\Bigr)\ll1,
\]
because $p^\sigma-1\ll\sigma\log p$ for $p\le y$ and $\sum_{p\le y}(\log p)/p\ll\log y$. If $s(n-1)$ is $y$-smooth, its squarefreeness gives
\[
 \prod_{\substack{p\mid n-1\\p\le y}}p^\sigma\ge s(n-1)^\sigma
 \ge(X^{1/2}/3)^{1/\log y}=e^{1/(6\varepsilon)+o(1)}.
\]
The lemma follows from Markov's inequality.
\end{proof}

\begin{proposition}\label{prop:candidates}
Let $\mathcal A$ be a set of squarefree positive integers such that $\#(\mathcal A\cap[1,Y])\ge\kappa Y$ for some $\kappa>0$ and all large $Y$. Let $\varepsilon>0$ be sufficiently small in terms of $\kappa$, and put $\alpha=\varepsilon/2$. Then for all large $X$ there are $\gg_\kappa\alpha X$ squarefree integers $n=ap\in[X/2,X]$ with $a\in\mathcal A$, $X^{\alpha/2}\le a\le X^\alpha/2$, $p=P(n)>X^{1-\alpha}$ and $P(s(n-1))>X^{3\varepsilon}$.

These integers have no representation with fewer than five factorials, and every five-factor representation has two consecutive pairs as in Corollary~\ref{cor:unit}. If $\mathcal A\subseteq\mathcal R$, they have no five-factor representation either.
\end{proposition}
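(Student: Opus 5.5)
The plan is to count pairs $(a,p)$ directly, then remove the bad ones. Fix $a\in\mathcal A$ with $X^{\alpha/2}\le a\le X^\alpha/2$, and let $p$ range over primes in $(X/(2a),X/a]$. Then $n=ap\in(X/2,X]$. Since $a\le X^\alpha/2$, we get $p>X^{1-\alpha}$; in particular $p>a$, so $p\nmid a$, and $n$ is squarefree. Moreover $p=P(n)$, because every prime factor of $a$ is at most $a<p$. Distinct pairs $(a,p)$ give distinct $n$, since $p$ is determined as $P(n)$. By the prime number theorem, each $a$ contributes $(1+o(1))X/(2a\log X)$ primes, uniformly in $a\le X^\alpha$, because $X/a\ge X^{1-\alpha}$. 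Hence the number of candidates is
\[
\gg \frac{X}{\log X}\sum_{\substack{a\in\mathcal A\\ X^{\alpha/2}\le a\le X^\alpha/2}}\frac1a .
\]
Partial summation with $\#(\mathcal A\cap[1,Y])\ge\kappa Y$ bounds the sum below. Over each dyadic range $[Y,2Y]$, the set $\mathcal A$ has at least $\kappa\cdot 2Y - Y$ elements, which may be negative. So instead I would use ranges $[Y,KY]$ with $K=2/\kappa$: each contains at least $\kappa KY-Y=Y$ elements of $\mathcal A$, contributing $\gg 1/K\asymp\kappa$ to $\sum 1/a$. The interval $[X^{\alpha/2},X^\alpha/2]$ contains $\gg \alpha\log X/\log K$ such ranges. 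This gives $\gg_\kappa \alpha\log X$, and hence $\gg_\kappa\alpha X$ candidates, say at least $c_1(\kappa)\alpha X$ of them.

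Next I remove those with $P(s(n-1))\le X^{3\varepsilon}$. By Lemma~\ref{lem:smooth} there are at most $CXe^{-1/(6\varepsilon)}+O(X^{3/4})$ of them among all $n\in[X/2,X]$. Since $\alpha=\varepsilon/2$, we need $Ce^{-1/(6\varepsilon)}\le \tfrac12 c_1(\kappa)\varepsilon/2$. This holds once $\varepsilon$ is small in terms of $\kappa$, because $e^{-1/(6\varepsilon)}$ decays faster than any power of $\varepsilon$. This is where the hypothesis ``$\varepsilon$ sufficiently small in terms of $\kappa$'' enters, and it is the only delicate point of the count. What remains is $\gg_\kappa\alpha X$ integers with all the listed properties.

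For the second paragraph of the statement, I apply Corollary~\ref{cor:unit} with this $\varepsilon$. Its hypotheses hold: $n$ is squarefree, $n\in[X/2,X]$, $P(n)>X^{1-\alpha}=X^{1-\varepsilon/2}$, and $P(s(n-1))>X^{3\varepsilon}$. The corollary allows $X^{1/2+6\varepsilon+o(1)}$ exceptions, which is $o(\alpha X)$, so discarding them keeps $\gg_\kappa\alpha X$ integers. For these there is no representation with fewer than five factorials, and every five-factor representation has the form $n!(n-1)!m!(m-1)!c!=\sq$.

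Finally, suppose $\mathcal A\subseteq\mathcal R$. Section~\ref{sec:proof-main} showed that such a representation forces $m=tp$ with $1\le t<a$ and $at\,c!=\sq$. This requires $c<p$, which holds because Lemma~\ref{lem:smallest} gives $c\ll\log X$. The condition $at\,c!=\sq$ in turn forces $s(ac!)\le t<a$, contradicting $a\in\mathcal R$. So these integers have no five-factor representation.
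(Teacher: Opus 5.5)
Your proposal is correct and follows the paper's proof step for step: you use the same pairs $(a,p)$ with $p\in(X/(2a),X/a]$, remove smooth $s(n-1)$ via Lemma~\ref{lem:smooth} with $\varepsilon$ small in terms of $\kappa$, then apply Corollary~\ref{cor:unit} and the $\mathcal R$ argument. The only variation is that you bound $\sum_{a\in\mathcal A}1/a$ from below using blocks $(Y,2Y/\kappa]$, where the paper uses partial summation; this loses a factor $\log(2/\kappa)$, which is absorbed into $\gg_\kappa$. One small correction: state the per-$a$ prime count as a lower bound $\ge(1+o(1))X/(2a\log X)$, since $\log(X/a)\le\log X$ rather than $\log(X/a)\sim\log X$.
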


\begin{proof}
For each $a\in\mathcal A$ with $X^{\alpha/2}\le a\le X^\alpha/2$, let $p$ run over the primes in $(X/(2a),X/a]$. Then $p>X^{1-\alpha}$ is larger than $a$ and than $\sqrt X$, so $n=ap$ is squarefree, $P(n)=p$, and distinct pairs $(a,p)$ give distinct $n$. By the prime number theorem, the number of such $n$ is
\[
 \gg\frac X{\log X}\sum_{\substack{a\in\mathcal A\\ X^{\alpha/2}\le a\le X^\alpha/2}}\frac1a.
\]
To bound this sum from below, we use the positive lower density of $\mathcal A$. Partial summation gives
\[
 \sum_{\substack{a\in\mathcal A\\ Y\le a\le Y^2/2}}\frac1a\ge\int_Y^{Y^2/2}\frac{\#(\mathcal A\cap[1,u])}{u^2}\,du-1\ge\kappa\log Y-O(1).
\]
Taking $Y=X^{\alpha/2}$, we obtain $\gg\kappa\alpha X$ candidates.

By Lemma~\ref{lem:smooth}, at most $CXe^{-1/(6\varepsilon)}+O(X^{3/4})$ of them have $P(s(n-1))\le X^{3\varepsilon}$. Since $e^{-1/(6\varepsilon)}=o(\varepsilon)$, we can make this less than half the count above by choosing $\varepsilon$ sufficiently small in terms of $\kappa$. Corollary~\ref{cor:unit} removes a further $o(X)$ integers and gives the first assertion. The second follows from the discussion of $\mathcal R$ above.
\end{proof}

\subsection{Six factors}
By Lemma~\ref{lem:R}, Proposition~\ref{prop:candidates} applies with $\mathcal A=\mathcal R$ and yields $\gg X$ integers $n=ap\in[X/2,X]$ with no representation by fewer than six factorials. Since $a\ge X^{\alpha/2}$ and $p\ge X^{1-\alpha}$, the six arguments in~\eqref{eq:six} are distinct for large $X$, so $F(n)=6$. This proves the case $k=6$ of Theorem~\ref{thm:main}.

\subsection{Five factors}
If $d\,bc_0!=w^2$, then for $n=dz$ and $m=bz$,
\begin{equation}\label{eq:straus}
 n!\,(n-1)!\,m!\,(m-1)!\,c_0!=nm\,c_0!\bigl((n-1)!\,(m-1)!\bigr)^2=\bigl(wz\,(n-1)!\,(m-1)!\bigr)^2.
\end{equation}
Identities of this kind are due to Straus; see~\cite[Fact~7, pp.~346--347]{EG}.

\begin{proposition}\label{prop:five}
Let $d>1$ be squarefree, and suppose that $d\,bc_0!$ is a square for some integers $1\le b<d$ and $c_0\ge2$. Let $V\ge1$ be an integer coprime to $d$. Then for all large $X$ there are $\gg_{d,b,c_0,V}X$ squarefree $n\in[X/2,X]$ with $d\mid n$, $(n,V)=1$ and $F(n)=5$.
\end{proposition}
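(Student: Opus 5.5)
We apply Proposition~\ref{prop:candidates} with a set $\mathcal A$ built from $d$, and use the Straus identity~\eqref{eq:straus} for the upper bound $F(n)\le5$. Take $\mathcal A$ to be the set of integers $a=dz$ with $z$ squarefree, coprime to $dV$, and having no prime factor at most $B_0$ (with $B_0\ge\max(d,c_0,V)$ large). These $a$ are squarefree and divisible by $d$. Moreover $\mathcal A$ has positive lower density $\kappa=\kappa(d,V,B_0)>0$ by the standard count of squarefree integers free of small primes. For $n=ap$ with $p>X^{1-\alpha}$ prime, we have $(n,V)=1$ because $p\nmid V$ for large $X$ and $(z,V)=(d,V)=1$.

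\emph{Upper bound.} Write $n=d\cdot(zp)$ and set $m=b\,zp<n$. Since $d\,b\,c_0!$ is a square, \eqref{eq:straus} gives
\[
 n!\,(n-1)!\,m!\,(m-1)!\,c_0!=\sq .
\]
The arguments $n>n-1>m>m-1>c_0$ are distinct for large $X$, since $b<d$ forces $m\le n-zp$ and $c_0$ is fixed. Hence $F(n)\le5$.

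\emph{Lower bound.} Proposition~\ref{prop:candidates} yields $\gg_\kappa\alpha X$ squarefree $n=ap\in[X/2,X]$ with $a\in\mathcal A$ which have no representation with fewer than five factorials. These therefore satisfy $F(n)=5$, with $d\mid n$ and $(n,V)=1$. The constant depends on $\kappa$ and $\alpha$, and hence only on $d,b,c_0,V$ once $\varepsilon$ is fixed small in terms of $\kappa$.

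The main point to check is that Proposition~\ref{prop:candidates} does not require $\mathcal A\subseteq\mathcal R$ for its first assertion. The exclusion of representations with fewer than five factorials rests only on Corollary~\ref{cor:unit} and Lemma~\ref{lem:smooth}, which is exactly what we need here. The density of $\mathcal A$ is routine: a sieve over primes $\le B_0$ together with the squarefree condition gives density
\[
 \asymp_{d,V}\prod_{p\le B_0}(1-1/p)\,/\,d>0 .
\]
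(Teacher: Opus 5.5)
Your proof is correct and is essentially the paper's argument. You apply Proposition~\ref{prop:candidates} (whose first assertion indeed does not need $\mathcal A\subseteq\mathcal R$) to a positive-density set of squarefree multiples of $d$ coprime to $V$, and then use the Straus identity~\eqref{eq:straus} with parameter $n/d$ to get $F(n)\le5$. The only difference is your extra requirement that $z$ have no prime factor at most $B_0$, which is harmless but unnecessary: the paper simply takes all squarefree $a$ with $d\mid a$ and $(a,V)=1$, a set of density $\frac6{\pi^2}\prod_{q\mid d}\frac1{q+1}\prod_{q\mid V}\frac q{q+1}>0$.
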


\begin{proof}
The set $\mathcal A$ of squarefree $a$ with $d\mid a$ and $(a,V)=1$ has density
\[
 \frac6{\pi^2}\prod_{q\mid d}\frac1{q+1}\prod_{q\mid V}\frac q{q+1}>0,
\]
so Proposition~\ref{prop:candidates} yields $\gg X$ integers $n=ap$ with no representation by fewer than five factorials. They satisfy $d\mid n$, and $(n,V)=1$ because $p>V$. Writing $n=dz$, identity~\eqref{eq:straus} is a five-factor representation of $n$, whose arguments $n>n-1>bz>bz-1>c_0$ are distinct for large $X$. Hence $F(n)=5$.
\end{proof}

For even $n$ we take $(d,b,c_0,V)=(2,1,2,1)$, which gives
\[
 (2z)!\,(2z-1)!\,z!\,(z-1)!\,2!=\bigl(2z\,(2z-1)!\,(z-1)!\bigr)^2,
\]
and for odd $n$ we take $(d,b,c_0,V)=(3,2,3,2)$, which gives
\[
 (3z)!\,(3z-1)!\,(2z)!\,(2z-1)!\,3!=\bigl(6z\,(3z-1)!\,(2z-1)!\bigr)^2.
\]
This completes the proof of Theorem~\ref{thm:main}.

\section{Further questions}\label{sec:questions}

By~\eqref{eq:nonsquarefree} and Theorem~\ref{thm:three},
\[
 \liminf_{X\to\infty}\frac{D_4(X)}X\ge1-\frac6{\pi^2}.
\]
If the squarefree integers with $F(n)=4$ had density zero, as Erd\H{o}s and Graham suggested~\cite[p.~346]{EG}, then $D_4(X)/X$ would tend to $1-6/\pi^2$. Our two-block argument requires a prime factor exceeding $X^{1-\alpha}$ for a small fixed $\alpha>0$, and does not decide this.

\begin{question}
Do the limits $\lim_{X\to\infty}D_k(X)/X$ exist for $k=4,5,6$? Can their values be expressed as Euler products or convergent series?
\end{question}

The proof of Lemma~\ref{lem:short} depends on the large prime factor of $n$ through the small denominator $A$ of $\theta$, which enters both estimates in~\eqref{eq:scales}.

\begin{question}
Does the conclusion of Lemma~\ref{lem:short} hold for all but $o(X)$ integers $n\le X$, without the assumption $P(n)>n^{1-\alpha}$?
\end{question}

\appendix
\section{Perfect powers from a bounded number of factorials}\label{app:affine}

At the end of their paper, Erd\H{o}s and Graham~\cite[p.~354]{EG} state that, for every $m\ge2$, infinitely many $m$-th powers can be written as products of a bounded number of factorials. They leave the proof for a later paper. B\'erczes, Dujella, Hajdu, Saradha and Tijdeman~\cite[p.~339]{BDHST} report that they could not find this continuation. In this appendix we give a construction with distinct factorials whose arguments are affine functions of a parameter. Throughout the appendix, $m$ denotes the exponent.

\begin{theorem}\label{thm:affine}
For every integer $m\ge2$ there are an integer $N\ge1$ and distinct affine functions $a_i(t)=\alpha_it+\beta_i$, $1\le i\le N$, with integer coefficients $\alpha_i>0$ and $\beta_i\ge0$, such that for every sufficiently large integer $t$ the numbers $a_i(t)$ are distinct and at least $2$, and $\prod_{i=1}^Na_i(t)!$ is an $m$-th power. Moreover one can take $m\mid N$ and
\[
 N\le\begin{cases}2m(m-1)\lceil e^m\rceil&\text{if $m$ is a prime power},\\ 2c_m(c_m+1)\lceil e^{c_m+1}\rceil&\text{in general},\end{cases}
 \qquad c_m=\lceil m\log m\rceil.
\]
In particular $N\le\exp(O(m\log m))$.
\end{theorem}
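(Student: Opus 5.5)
The plan is to build the $m$-th power from consecutive pairs of factorials, as in the two-factor and Straus identities of the main text, with all arguments multiples of a single parameter $t$. For an integer $k\ge1$,
\[
 (kt)!\,(kt-1)! \;=\; kt\,\bigl((kt-1)!\bigr)^2 .
\]
For distinct $k_1,\dots,k_r$ we therefore get
\[
 \prod_{j=1}^r (k_jt)!\,(k_jt-1)! \;=\; t^{\,r}\Bigl(\prod_{j=1}^r k_j\Bigr)\Bigl(\prod_{j=1}^r (k_jt-1)!\Bigr)^2 .
\]
This gives an $m$-th power for every $t$ under three conditions: $m\mid r$, the product $\prod_j k_j$ is an $m$-th power, and the square factor is also an $m$-th power. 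The square factor is the snag when $m$ is odd. I would repair it by replacing each pair with a block of $m$ consecutive factorials $(kt)!,(kt-1)!,\dots,(kt-m+1)!$. This block equals $\bigl((kt-m+1)!\bigr)^m$ times a leftover $\prod_{i=0}^{m-2}(kt-i)^{m-1-i}$.

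The leftover is harder: the linear forms $kt-i$ with $i\ge1$ are not proportional to $t$. So I would choose the offsets with care. Take the block $(kt)!\,(kt-1)!\cdots$ only in the form where every leftover linear form passes through the origin. Concretely, use the $m$ factorials $(kt)!,(kt)!/\dots$ arranged so that, after extracting $m$-th powers, the leftover is $(kt)^{e}$ with $e$ a fixed exponent. One way: use the pair identity with exponent $1$, and take each pair $m$ times over different multipliers, so the squares pair up into $m$-th powers. This requires grouping into $m$ rather than $2$. If this bookkeeping fails, I fall back on the fact that the forms $\alpha t+\beta$ only matter up to the ray $(\alpha:\beta)$. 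Forms on a common ray differ by rational constants, and the constants land in $\mathbb Q^\times/\mathbb Q^{\times m}$. In either version the problem reduces to a constant being an $m$-th power together with a count condition $m\mid r$.

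That reduction is a zero-sum problem. Fix a bound $B$ and let $k$ range over the $B$-smooth integers. Map each $k=\prod_{p\le B}p^{v_p}$ to
\[
 \phi(k)=\bigl((v_p \bmod m)_{p\le B},\,1\bmod m\bigr)\in G=(\Z/m)^{\pi(B)}\times\Z/m .
\]
A nonempty zero-sum subsequence of $(\phi(k))$ over distinct $k$ is then exactly a set of distinct $k_j$ with $m\mid r$ and $\prod k_j$ an $m$-th power. It exists once the number of distinct $B$-smooth integers used exceeds the Davenport constant $D(G)$. For $m$ a prime power, Olson's theorem gives $D(G)=1+(\pi(B)+1)(m-1)$. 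In general I would use the van Emde Boas--Kruyswijk bound $D(G)\le \exp(G)\bigl(1+\log(|G|/\exp(G))\bigr)$, which leads to the parameter $c_m=\lceil m\log m\rceil$. The resulting affine functions are $a(t)=k_jt$ and $k_jt-1$, possibly with more shifts from the block version. They have positive slopes and are distinct, all at least $2$ for large $t$, and $m\mid N$ because $m\mid r$.

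The main obstacle is the quantitative step. One must choose $B$ and a set of small smooth $k$ whose size beats $D(G)$ while $N$ (twice, or $m$ times, the zero-sum length, at most $D(G)$) meets the stated bounds. I would first try taking $k\le e^m$, all smooth with respect to $B\approx m$. The count of such integers, $\lceil e^m\rceil$, is compared with $D(G)\approx m\,\pi(B)$. This should explain the factors $2m(m-1)\lceil e^m\rceil$ and $2c_m(c_m+1)\lceil e^{c_m+1}\rceil$. The secondary difficulty is the odd-$m$ bookkeeping in the second paragraph, ensuring the leftover square factor contributes only forms proportional to $t$.
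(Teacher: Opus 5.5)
There is a genuine gap, and it sits in the step you call ``bookkeeping''. Your zero-sum problem over $(\Z/m)^{\pi(B)}\times\Z/m$ only controls the constant $\prod_j k_j$ and the power of $t$. It never touches the factorials themselves. For every $m\ge3$, not only odd $m$, the leftover $\bigl(\prod_j(k_jt-1)!\bigr)^2$ is not an $m$-th power, and it cannot be absorbed. The factorials $(kt-1)!$ and $(k't-1)!$ with $k\ne k'$ differ by about $|k-k'|t$ linear factors, so each one is an independent generator, and here it occurs with exponent $2\not\equiv0\pmod m$. Taking pairs ``$m$ times over different multipliers'' produces $m$ different factorials, not an $m$-th power of one.

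The block repair does cancel the factorial. (The leftover is actually $\prod_{h=0}^{m-2}(kt-h)^{h+1}$, not $(kt-h)^{m-1-h}$.) But it introduces the forms $kt-h$ with $1\le h\le m-2$, which lie on the rays $t-h/k$. Already for $h=1$ these rays are pairwise distinct as $k$ varies. So you get at least as many generators $r$ as blocks, while $\mathsf D(C_m^r)\ge1+r(m-1)$, and no Davenport-constant argument can then produce a zero sum. Requiring every leftover form to pass through the origin forces blocks of length at most two, which brings you back to the pair problem. So the claimed reduction to ``a constant being an $m$-th power together with $m\mid r$'' never takes place.

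Your remark about rays points in the right direction. The whole difficulty, however, is to design slopes and shifts so that many factorials share a short list of rays, and that is the idea the paper supplies. It uses rows $(L_Rt/\ell+j)!$ for $1\le\ell\le R$ and $0\le j\le\floor{K/\ell}$, with $L_R=\lcm(1,\dots,R)$. Passing from $j-1$ to $j$ multiplies by $(L_Rt+\ell j)/\ell$, so every row uses only the common forms $L_Rt+i$ with $1\le i\le K$. The other generators are one factorial $(L_Rt/\ell)!$ per row and the primes $p\le R$. This gives $r=K+R+\pi(R)$ generators against $M_0>K\sum_{\ell\le R}1/\ell>K\log R$ factorials, and the harmonic sum is what beats $\mathsf D(C_m^r)$.

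This is also where $e^m$ comes from. One takes $R=\lceil e^m\rceil$ rows so that $\log R\ge m$ (respectively $R=\lceil e^{c_m+1}\rceil$). It is not a count of smooth integers up to $e^m$; with smoothness bound about $m$ that count is far smaller than $e^m$. Finally, $m\mid N$ holds because the row generator $(L_Rt/\ell)!$ occurs with exponent $1$ in every term of its row. Hence each row contributes a multiple of $m$ selected factorials.
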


Theorem~2.1 of~\cite{BDHST} concerns products in which factorials may be repeated, and~\cite[Theorem~4.2]{BDHST} gives, for infinitely many $n$, cubes that are products of fewer than $\frac32\log n$ distinct factorials with largest argument $n$. Theorem~\ref{thm:affine} gives a number of factors depending only on the exponent, along an arithmetic progression of largest arguments (Corollary~\ref{cor:ap}). Related constructions for products of disjoint blocks of consecutive integers were studied by Ska{\l}ba~\cite{S03,S19}. Expressing a block as a factorial ratio and clearing the denominator modulo $m$-th powers generally repeats the denominator factorial $m-1$ times; the construction below preserves distinctness by working directly with individual factorials.

For a finite abelian group $G$, the Davenport constant $\mathsf D(G)$ is the least integer $D$ such that every sequence of $D$ elements of $G$, repetitions allowed, has a nonempty subsequence with sum zero. Let $C_m=\Z/m\Z$. By Olson's theorem~\cite{O},
\[
 \mathsf D(C_m^r)=1+r(m-1)\qquad\text{if $m$ is a prime power},
\]
and the bound of van Emde Boas and Kruyswijk~\cite{EK} (see also Meshulam~\cite{M}) gives, for $m\ge3$,
\[
 \mathsf D(C_m^r)\le\floor{m\bigl(1+(r-1)\log m\bigr)}+1\le c_mr.
\]
The last inequality holds because the quantity inside the floor is strictly less than the integer $rc_m$. For $m=2$, Olson's formula gives $\mathsf D(C_2^r)=r+1\le2r=c_2r$. Thus $\mathsf D(C_m^r)\le c_mr$ for every $m\ge2$.

\begin{proof}[Proof of Theorem~\ref{thm:affine}]
We express the factorials as monomials in a fixed finite set of formal generators. The Davenport constant then supplies a nonempty subfamily whose exponent vectors sum to zero modulo $m$.

Let $R,K\ge1$ and $L_R=\lcm(1,\dots,R)$. For $1\le\ell\le R$ and $0\le j\le\floor{K/\ell}$ consider
\[
 F_{\ell,j}(t)=\Bigl(\frac{L_Rt}\ell+j\Bigr)!.
\]
Call the terms with a given $\ell$ the $\ell$-th row. For $j\ge1$, passing from $F_{\ell,j-1}$ to $F_{\ell,j}$ multiplies the factorial by $(L_Rt+\ell j)/\ell$. Since $\ell j\le K$, all rows use linear factors from the same list $L_Rt+1,\dots,L_Rt+K$. With the generators
\[
 \Phi_\ell(t)=\Bigl(\frac{L_Rt}\ell\Bigr)!\quad(1\le\ell\le R),\qquad Y_i(t)=L_Rt+i\quad(1\le i\le K),
\]
and the primes $p\le R$, we have
\[
 F_{\ell,j}(t)=\Phi_\ell(t)\,\ell^{-j}\prod_{h=1}^jY_{\ell h}(t),\qquad\ell^{-j}=\prod_{p\le R}p^{-jv_p(\ell)},
\]
where $v_p(\ell)$ is the exponent of $p$ in $\ell$ and $\ell h\le K$. Thus each $F_{\ell,j}$ is a monomial in $r=R+K+\pi(R)$ generators, with an exponent vector in $\Z^r$ (possibly with negative entries) that does not depend on $t$. Reducing these vectors modulo $m$ gives a sequence of
\[
 M_0=\sum_{\ell=1}^R\Bigl(\floor{\frac K\ell}+1\Bigr)>K\sum_{\ell=1}^R\frac1\ell>K\log R,
\]
elements of $C_m^r$.

Suppose that $M_0\ge\mathsf D(C_m^r)$. Then some nonempty subsequence of at most $\mathsf D(C_m^r)$ terms has sum zero. The selection is independent of $t$. For the corresponding factorials, every generator occurs with total exponent divisible by $m$. Thus for every positive integer $t$ their product is the $m$-th power of a positive rational number. Since the product is an integer, this rational number is an integer too.

Let $N$ be the number of selected factorials. The generator $\Phi_\ell$ occurs exactly in the terms of the $\ell$-th row, with exponent $1$, so the number of selected terms in each row is divisible by $m$; in particular $m\mid N$.

Arguments in the same row are distinct. Arguments in different rows have different slopes $L_R/\ell$, whose differences are at least $1$, and shifts in $[0,K]$; so they are distinct for $t>K$. All arguments are at least $L_Rt/R\ge2$ for large $t$.

The number of available factorials exceeds $K\log R$, whereas the number of generators is $K+R+\pi(R)$. We choose $R$ first to make $\log R$ large enough, and then $K$ to ensure $M_0\ge\mathsf D(C_m^r)$. If $m$ is a prime power, let $R=\lceil e^m\rceil$, $R_0=R+\pi(R)$ and $K=(m-1)R_0$. Then $r=mR_0$ and $M_0>K\log R\ge m(m-1)R_0=r(m-1)$, so $M_0\ge\mathsf D(C_m^r)$ and $N\le\mathsf D(C_m^r)=1+m(m-1)R_0$. Since $m\mid N$, we get $N\le m(m-1)R_0\le2m(m-1)\lceil e^m\rceil$.

For general $m$, let $R=\lceil e^{c_m+1}\rceil$, $R_0=R+\pi(R)$ and $K=c_mR_0$. Then $r=(c_m+1)R_0$ and $M_0>K\log R\ge c_m(c_m+1)R_0=c_mr\ge\mathsf D(C_m^r)$, which gives $N\le c_m(c_m+1)R_0\le2c_m(c_m+1)\lceil e^{c_m+1}\rceil$.
\end{proof}

\begin{corollary}\label{cor:ap}
For every $m\ge2$ there are $N\ge1$, $q_0\ge1$, a residue class $b$ modulo $q_0$ and $\lambda>0$ such that every sufficiently large $n\equiv b\pmod{q_0}$ admits a representation
\[
 a_1!\cdots a_N!=y^m,\qquad\lambda n\le a_1<\cdots<a_N=n,
\]
with $N$ bounded as in Theorem~\ref{thm:affine}.
\end{corollary}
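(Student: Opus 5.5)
Corollary~\ref{cor:ap} follows from Theorem~\ref{thm:affine} by a substitution. Fix the affine functions $a_i(t)=\alpha_it+\beta_i$ given by the theorem for the exponent $m$. Order them so that $a_N$ is eventually the largest. That is, $a_N$ has the maximal slope, and among those of maximal slope it has the maximal intercept. This is possible because the $a_i$ are distinct, and it gives a total order for large $t$. Put $q_0=\alpha_N$ and $b\equiv\beta_N\pmod{q_0}$.

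Every $n\equiv b\pmod{q_0}$ with $n\ge\beta_N$ can be written uniquely as $n=\alpha_Nt+\beta_N$ with $t=(n-\beta_N)/\alpha_N\ge0$ an integer. As $n\to\infty$ we have $t\to\infty$, so for all sufficiently large such $n$ the theorem applies at this $t$. It gives distinct arguments $a_i(t)\ge2$ with $\prod_i a_i(t)!=y^m$. Relabel the arguments in increasing order. By the choice of $a_N$, the largest argument is $a_N(t)=n$ once $t$ is large.

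For the lower bound, let $\alpha_{\min}=\min_i\alpha_i>0$ and $\lambda=\alpha_{\min}/(2\alpha_N)$. For large $t$, every $i$ satisfies
\[
 a_i(t)\ge\alpha_{\min}t\ge\lambda(\alpha_Nt+\beta_N)=\lambda n,
\]
since $2\alpha_N t\ge\alpha_Nt+\beta_N$ when $t\ge\beta_N$. So $\lambda n\le a_1<\cdots<a_N=n$, and $N$ is the same as in Theorem~\ref{thm:affine}.

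The only step that needs care is identifying which affine function is eventually the largest and making sure it equals $n$ exactly. Matching $n$ to the residue class of that function's intercept modulo its slope handles this. Everything else is bookkeeping of "sufficiently large" thresholds, which are finitely many.
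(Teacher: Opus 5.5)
Your proposal is correct and follows the paper's argument: take the eventually-largest affine function, note that it takes every sufficiently large value in its residue class modulo its slope, and bound the smallest argument below by a fixed multiple of $n$ using the ratio of slopes. Your explicit choice $\lambda=\alpha_{\min}/(2\alpha_N)$ simply makes concrete the paper's remark that the ratio of the smallest to the largest argument tends to $\min_i\alpha_i/\max_i\alpha_i$.
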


\begin{proof}
For large $t$ the order of the functions $a_i(t)$ in Theorem~\ref{thm:affine} is fixed. The largest has the form $q_0t+b$, and takes every sufficiently large value $n\equiv b\pmod{q_0}$, while the ratio of the smallest to the largest tends to $\min_i\alpha_i/\max_i\alpha_i>0$.
\end{proof}

\begin{remark}
If only a specified set of rows is used, $L_R$ can be replaced by the least common multiple of their denominators. The construction is finite and effective.
\end{remark}

\section*{Acknowledgements}
Large language models (ChatGPT and Claude) were used in developing and checking arguments, locating references, and editing the exposition. The author is responsible for all content.

\end{document}